\newtheorem{theorem}{Theorem}[section]
\newtheorem{lemma}{Lemma}[section]
\newtheorem{corollary}{Corollary}[section]
\newtheorem{assumption}{Assumption}[section]
\newcommand{\ignore}[1]{}
\title{Decentralized Asynchronous Non-convex Stochastic Optimization on Directed Graphs}
 \author{Vyacheslav Kungurtsev, Mahdi Morafah, Tara Javidi~\IEEEmembership{Member,~IEEE,} Gesualdo Scutari~\IEEEmembership{Member,~IEEE,}
 \thanks{V. Kungurtsev are with Department of Computer Science, Czech Technical University in Prague}
 \thanks{M. Morafah and T. Javidi are with the Department of Electrical Engineering, University of California-San Diego}
 \thanks{G. Scutari is with the School of Industrial Engineering and the School of Electrical and Computer Engineering (ECE) at Purdue University}}
\begin{document}

\maketitle
\begin{abstract}
Distributed Optimization is an increasingly important
    subject area with the rise of multi-agent control and optimization. We consider a decentralized stochastic  optimization problem where the agents on a graph aim to asynchronously optimize a collective (additive) objective function consisting of  agents' individual (possibly non-convex) local objective functions. Each agent only has access to a noisy estimate of the gradient of its own function (one component of the sum of objective functions). We proposed an asynchronous distributed algorithm for such a class of problems. The algorithm combines  stochastic gradients with tracking in an asynchronous   push-sum framework and obtain the standard sublinear convergence rate for general non-convex functions, matching the rate of centralized stochastic gradient descent SGD. 
    Our experiments on a non-convex image classification task using convolutional neural network validate the convergence of our proposed algorithm across different number of nodes and graph connectivity percentages. 
\ignore{
Distributed Optimization is an increasingly important
    subject area with the rise of multi-agent control and optimization. We consider a decentralized optimization problem where the agents on a graph aim to asynchronously optimize a collective (additive) objective function consisting of  agents' individual (possibly non-convex) local objective functions. In particular, we propose a .... Combining ..., we obtain a sublinear convergence rate for general (non-convex) functions while we obtain a faster (I would specify what you mean by faster?) convergence rate in the convex setting. }
   
\end{abstract}

\section{Introduction}
In this paper we consider the global optimization problem,
\begin{equation}\label{eq:prob}
\min_{\mathbf{x}\in\mathbb{R}^n} \mathbb{E}_\xi\left[F(\mathbf{x},\xi)\right]\triangleq \sum_{i=1}^m \mathbb{E}_\xi\left[f_i(\mathbf{x},\xi)\right]
\end{equation}
where the agents' local objective functions $\mathbb{E}[f_i(\cdot,\xi)], i=1, \ldots, m$ are smooth and generally nonconvex and known only locally to the $m$ networked agents. In addition, each agent is assumed to only have access to noisy estimates of $f_i$ and its gradients.
Communication across the network is performed asynchronously in a gossip fashion, i.e., 
there is a (possibly) directed  graph $\mathcal{G}=\{\mathcal{V},\mathcal{E}\}$, 
and for each edge $e\in\mathcal{E}\subseteq \mathcal{V}\times\mathcal{V}$ the vertices elements $\{i,j\}\in e$ implies that node $i$ can send information to node $j$. Define $\mathcal{N}^{in}_i:=\{j\in\mathcal{V}: (j,i)\in\mathcal{E}\}$ and $\mathcal{N}^{out}_i:=\{j\in\mathcal{V}: (i,j)\in\mathcal{E}\}$.

We make the following assumption on the problem,
\begin{assumption}\label{as:problem}
\begin{enumerate}
    \item For a.e. $\xi$, $f_i(\cdot,\xi)$ is proper, closed, and $L_i$-Lipschitz continuously differentiable. Furthermore (for a.e. $\xi$) $F(\cdot,\xi)$ is bounded from below.
\item The (di)graph $\mathcal{G}$ is strongly connected.
\end{enumerate}
\end{assumption}

In this paper we uniquely address several concomitant challenges 1) the objective function is stochastic nonconvex 2) the sum-components of the objective function are known only locally to each agent 3) communication is fully asynchronous, modeled as each iteration consisting of a random activation among the agents
as well as delays in the communicated information and 4) the topology of the network is arbitrary (i.e., no hierarchical structure) and communication is \emph{directed}, i.e., that agent $i$ being able to send data to $j$ does not necessarily imply that $j$ can also send information to $i$.

\textbf{Previous Works} There are a number of works in the literature that consider \emph{distributed} or \emph{decentralized} stochastic optimization addressing a partial subset of these challenges. 

A bulk of works consider distributing computation in a shared memory setting while allowing for asynchronous updating (and thus read and/or write lock-free) including the classic~\cite{tsitsiklis1986distributed} and the seminal work~\cite{lian2015asynchronous}, the general framework for block/coordinate parallel updates given in~\cite{peng2016arock}, and many thereafter. These, however, assume that every computing node has access to the entire function, or a noisy estimate thereof, rather than a component of it, and does not consider communication across an arbitrary network.

A standard structure for distributed optimization is the ``hub-spoke", ``parameter-server" or ``master-slave" architecture. This is considered, for instance, in~\cite{agarwal2011distributed} and~\cite{mcmahan2017communication}. In this case, it is assumed that the nodes have a hierarchical structure of communication, with one node aggregating information and coordinating the computation to be performed across the other nodes, which communicate solely with the central node and not at all with each other. In this paper we consider a more general arbitrary graph topology modeling the communication links across agents.

Schemes that consider an arbitrary graph topology include~\cite{xin2019variance} which considers \emph{convex} problems and uses variance reduction (i.e., accessing the entire local gradient vector periodically), and~\cite{tang2018d} which only considers \emph{undirected} graphs and with synchronization barriers.

Moving closer to our setting, contemporary works considering a decentralized graph communication structure and asynchronous communication include~\cite{lian2017asynchronous}, which analyzes
nonconvex problems and ~\cite{ConvexAsynchDecSGD} considers convex ones. However, they consider only \emph{undirected} graphs, and furthermore the ultimate function being minimized is not the desired objective, but a scaled one, based on the frequency of updates of each agent. This can frequently
not be known a priori, and thus is a poor target for the objective function. Without this knowledge, any solution
to the problem would be biased.

The push-sum framework was introduced in~\cite{tsianos2012push} to avoid systematic bias in the solution of multi-agent optimizations problems on directed graphs. The analysis of distributed consensus with delays was first given in~\cite{nedic2010convergence}, who introduced \emph{virtual} nodes which model information as passing from one to the next as one less delay until it arrives at the real-time node. Note that these are purely theoretical instruments, and need not be stored.


In~\cite{olshevsky2018robust} a stochastic gradient algorithm is presented based on the push sum approach to handling optimization over a network with asynchronous directed communication. However, convergence is only proven for strongly convex problems. 

The paper~\cite{tian2018achieving} describes an algorithm for using the push-sum framework in a nonconvex setting with asynchronous parallel communication, for deterministic objectives. 

The paper~\cite{zhang2019fully} considers asynchronous communication and directed graphs, and presents an algorithm with provably linear convergence towards the optimum under the Polyak-\L ojasiewicz condition. Finally~\cite{zhang2019decentralized} considers nonconvex
federated learning using gradient tracking, and prove
convergence, consensus, and asymptotic agreement
of each agent's average gradient estimate. They
consider the synchronous setting and undirected graphs. 

\textbf{Contributions} In this paper we study the theoretical and numerical convergence properties of decentralized stochastic nonconvex optimization on directed graphs with asynchronous communication. Thus, this paper extends the work of~\cite{tian2018achieving} to consider noisy function data and~\cite{olshevsky2018robust} to the case of nonconvex objectives, closing an important gap in the literature for decentralized stochastic optimization.

\section{Algorithm}
The algorithm is presented as Algorithm~\ref{alg:pushsum}, and described below. 

All agents update asynchronously and continuously without coordination,  using noisy gradient estimates and  possibly delayed information from their neighbors. 
Each agent $i$ maintains and updates  the  following local variables: i) a local estimate $\mathbf{x}_{i}$ of the common optimization vector $\mathbf{x}$; ii) the auxiliary variable $\mathbf{z}_{i}$, aiming at tracking the sample gradient $ \tilde \nabla F$   of the sum-loss (we use $\tilde \nabla F$ a sample instance of $\nabla F$), not available locally; and iii) some  mass counters $\rho_{ij}$ and buffer variables $\tilde{{\rho}}_{ij}$,  
 $j \in \mathcal{N}_i^{in}$, which are instrumental to track properly the sum-gradient $ \nabla F(\cdot,\xi)$ in the presence of asynchrony (their update is commented below). The   $k$-th iterate of the above variables is denoted by   $\mathbf{x}_{i}^k$, $\mathbf{z}_{i}^k$, $\rho_{ij}^k$, and  $\tilde{{\rho}}_{ij}^k$, respectively. 
In  Algorithm~\ref{alg:pushsum}, the iteration index $k$ is understood as a {\it global} iteration counter $k$, unknown to the agents,  which increases by $1$ whenever a variable of the agents changes.
	Let $i^k$ be the agent triggering iteration $k \to k+1$; it executes Steps (S1)-(S3) (no necessarily within the same activation), as described below.
	
\texttt{(S1) Stochastic gradient step:}  The active agent $i^k$  updates its local variable $\mathbf{x}^k_{i^k}$ by moving along the direction  of the sample gradient estimate  $\mathbf{z}_{i^k}^k$, with a step-size $\gamma\in (0,1]$, generating  $\mathbf{v}_{i^k}^{k+1}$.

\texttt{(S2) Consensus step with delays:} Agent $i^k$ may receive delayed variables from its in-neighbors $j \in \mathcal{N}_{i^k}^{in}$, whose iteration index is  $k - d_j^k$, where  $d_j^k\geq 0$ is the delay. 
	To perform its update, it  first  sorts the ``age'' of  all the received variables from agent $j$ since $k=0$, and then picks the most recently generated one. This is implemented  maintaining a local counter $\tau_{i^k j}$, updated  recursively as $\tau_{i^k j}^k = \max(\tau_{i^k j}^{k-1}, k-d_j^k)$. Thus,  the  variable agent $i^k$ uses from $j$ has iteration index $\tau_{i^k j}^k$.  Given this (outdated) information, agent $i^k$ performs  a consensus update  with mixing matrix $\mathbf{W} = (w_{ij})_{i,j=1}^I$ (to be properly chosen, see Assumption \ref{as:alg} below), generating   $\mathbf{x}^{k+1}_{i^k}$.
	
\texttt{(S3) Robust gradient tracking:} 	This step aims at tracking the sample sum-gradient $ \tilde\nabla F$ in the presence of asynchrony; it  builds on the  the  asynchronous sum-push scheme  introduced in   \cite{tian2018achieving} (note that \cite{tian2018achieving} does not deal with stochastic gradients), and works as follows. Each agent $i$ maintains  mass counters $\rho_{ji}$ associated  to $\mathbf{z}_i$  that record the cumulative mass generated by $i$  for $j \in \mathcal{N}_i^{out}$ since $k=0$; and transmits $\rho_{ji}$. 
	In addition, agent $i$ also maintains  buffer variables $\tilde{{\rho}}_{ij}$
	to track the latest mass counter $\rho_{ij}$  from $j \in \mathcal{N}_i^{in}$ that has been used in its update.
The update of the $\mathbf{z}$- and $\rho$-variables  employed by agent $i^k$ is as follows. Agent $i^k$   first performs the sum step \texttt{(S3.1)} using  a possibly delayed mass counter $\rho_{i j}^{\tau_{i j}^k}$ received from $j$. By computing the difference  $\rho_{i^k j}^{\tau_{i^k j}^k} - \tilde{\rho}_{i^kj}^k$, it collects the sum of the $a_{i^kj} z_j$'s generated by $j$ that it has not yet added. Then, agent $i^k$   sums them together with the gradient correction term   $\tilde \nabla f_{i^k} (\mathbf{x}_{i^k}^{k+1},\zeta^k) - \tilde \nabla f_i (\mathbf{x}_{i^k}^{k},\zeta^{j(i^k,k)})$  to its current state variable $\mathbf{z}_{i^k}^k$  to form the intermediate mass $\mathbf{z}_{i^k}^{k+\frac{1}{2}}$, where  $j(i^k,k)$ is the last iteration $j$ before $k$ for which $i^k$ is the chosen agent. Next, in the push step \texttt{(S3.2)}, agent $i^k$ splits  $\mathbf{z}_{i^k}^{k+\frac{1}{2}}$, maintaining $a_{i^k i^k} \mathbf{z}_{i^k}^{k+\frac{1}{2}}$ for itself and accumulating  $a_{ji^k} \mathbf{z}_{i^k}^{k+\frac{1}{2}}$ to its local mass counter $\rho_{ji^k}^k$, to be transmit to $j \in \mathcal{N}_{i^k}^{out}$.
Since the last mass counter agent $i^k$   processed is $\rho_{i^k j}^{\tau_{i^k j}^k}$, it sets $\tilde{\rho}_{i^kj} =\rho_{i^k j}^{\tau_{i^k j}^k} $.



\begin{algorithm}[h!]
	\caption{\textbf{- Asynchronous Stochastic Gradient Descent with Tracking}}
	\label{alg:pushsum}
	\begin{algorithmic}
		\STATE{\textbf{Initialization:}  Set $k=0$, Set $\mathbf{x}^0_i=\mathbf{0}$ and $\mathbf{z}_i^0=\tilde f_i(\mathbf{0},\xi^0)$ for all $i$.}\\
		\WHILE{Not converged}
		\STATE{Choose $(i^k,d^k)$;}
		\STATE{Set $\tau^k_{i^k j}=\max\{\tau_{i^k j}^{k-1},k-d^k_j\},\quad \forall j\in \mathcal{N}_{i^k}^{in}$};
		\STATE{\texttt{(S1) (Stochastic gradient update):} Set $\mathbf{v}^{k+1}_{i^k}=\mathbf{x}^k_{i^k}-\gamma^k \mathbf{z}^k_{i^k}$}
		\STATE{\texttt{(S2) Consensus (with delayed info):} $\mathbf{x}_{i^k}^{k+1}=
 {w}_{i^ki^k} \mathbf{v}^{k+1}_{i^k}+\sum_{j\in \mathcal{N}^{in}_{i^k}}  {w}_{i^k j} 
\mathbf{v}_{j}^{\tau^k_{i^kj}}$}
		\STATE{\texttt{(S3) Robust gradient tracking:}}\smallskip 
	\STATE{\texttt{\hspace{0.7cm}(S3.1) Sum step}:}
\[
\begin{array}{l}
\mathbf{z}^{k+\frac12}_{i^k}=\mathbf{z}^k_{i^k}+\sum_{j\in\mathcal{N}^{in}_{i^k}}(\rho^{\tau^k_{i^k j}}_{i^k j}-\tilde{\rho}^k_{i^k j})\\ \qquad +\tilde \nabla f_{i^k} (\mathbf{x}_{i^k}^{k+1},\zeta^k) - \tilde \nabla f_i (\mathbf{x}_{i^k}^{k},\zeta^{j(i^k,k)});
\end{array}
\]
		\STATE{\hspace{0.8cm}\texttt{(S3.2) Push step:}}
\[
\begin{array}{l}
\mathbf{z}^{k+1}_{(i^k)}=a_{i^k i^k} \mathbf{z}^{k+\frac12}_{(i^k)};\\
\mathbf{\rho}^{k+1}_{ji^k}=\mathbf{\rho}^k_{ji^k}+a_{ji^k} \mathbf{z}_{i^k}^{k+\frac12},\, \forall j\in\mathcal{N}^{out}_{i^k};
\end{array}
\]
		\STATE{\hspace{0.8cm}\texttt{(S3.3) Mass-Buffer update:}}
\[
\tilde{\mathbf{\rho}}^{k+1}_{i^k j} = \mathbf{\rho}^{\tau^k_{i^k j}}_{i^k j},\,\forall j\in\mathcal{N}^{in}_{i^k};
\]
	\STATE{\texttt{(S4):}
Untouched state variables shift to state $k+1$ while keeping the same value; $k\leftarrow k+1.$}
		\ENDWHILE
	\end{algorithmic} 
\end{algorithm} 

We make the following Assumption regarding the communication network, activation, delays, and stochastic gradient estimates.
\begin{assumption}\label{as:alg}

\begin{enumerate}
\item It holds that there exists an $\bar{m}$ such that for all $i\in\mathcal{V}$, $w_{ij}\ge \bar{m}$ and $a_{ij}\ge \bar{m}$ for all $(i,j)\in\mathcal{E}$. Furthermore the matrix $\mathbf{W}$ composed of $w_{ij}$ is row-stochastic ($\mathbf{W}\mathbf{1}=\mathbf{1}$) and $\mathbf{A}$ composed of $a_{ij}$ is column-stochastic ($\mathbf{A^T}\mathbf{1}=\mathbf{1}$).
\item There is a $T\in\mathbb{R}^+$ such that the activations satisfy $\cup_{t=k}^{k+T-1} i^t =\mathcal{V}$.
\item There is a $D\in \mathbb{R}^+$ such that the delays satisfy $0\le d_j^k\le D$ for all $j\in\mathcal{N}^{in}_{i^k}$ for all $k\in\mathbb{N}$
    \item The assumptions on the stochastic estimate are the standard unbiased estimate with bounded variance conditions,
\begin{equation}\label{eq:stochest}
\begin{array}{l}
\mathbb{E}\left[\tilde\nabla f_{i^k}(\mathbf{x}^{k+1}_{(i^k)},\zeta^k)\right] = \nabla f_{i^k}(\mathbf{x}^{k+1}_{(i^k)}), \\
\mathbb{E}\left[\left\|\tilde\nabla f_{i^k}(\mathbf{x}^{k+1}_{(i^k)},\zeta^k)-\nabla f_{i^k}(\mathbf{x}^{k+1}_{(i^k)})\right\|^2\right] = \sigma^2
\end{array} 
\end{equation}
\end{enumerate}
\end{assumption}



\section{Convergence}
In this section we prove the convergence properties of
Algorithm~\ref{alg:pushsum} for stochastic nonconvex objectives. We begin introducing some intermediate results, instrumental for our proofs. 

\subsection{Preliminaries}
Following \cite{tian2018achieving}, we define augmented variables $\mathbf{h}^k\triangleq \left[(\mathbf{x}^k)^T\, (\mathbf{v}^k)^T\, (\mathbf{v}^{k-1})^T\,
...(\mathbf{v}^{k-D})^T\right]$, where $D$ is the maximum possible delay time.
We denote the augmented gradient estimate stacked vector as $\hat{\mathbf{z}}^k$. Ultimately, there is a matrix $\hat{\mathbf{A}}^k$ representing the mixing of $\hat{z}$, i.e., $\hat{\mathbf{z}}^{k+1}=\hat{\mathbf{A}}^k \hat{\mathbf{z}}^k+\mathbf{p}^k$ \cite{tian2018achieving},
where $\mathbf{p}^k$ is simply the stacked change in the vector from the new stochastic gradient updates.   For the consensus of the expanded model vector $\mathbf{h}^k$ we denote by $\hat{\mathbf{W}}^k$ the corresponding mixing matrix, i.e., $\mathbf{h}^{k+1}=\hat{\mathbf{W}}^k (\mathbf{h}^k+\mathbf{\delta}^k)$, 
with $\mathbf{\delta}^k$ defining the stacked update vector. The   explicit expressions of the matrices $\hat{\mathbf{A}}^k$ and $\hat{\mathbf{W}}^k$ are immaterial for our   subsequent convergence analysis; all it is needed are their mixing rate properties, as recalled next. 



\begin{lemma}\cite[Lemma 14]{tian2018achieving}
In the setting of Algorithm 1, there exists a sequence of stochastic vectors 
$\{\mathbf{\xi}^k\}$ such that, for any $k\ge t\in\mathbb{N}$ and
$i,j\in \mathcal{V}$, there holds 
\[
|\hat{\mathbf{A}}_{ij}^{k:t}-\xi^k_i|\le C\rho^{k-t},
\]
for some   $C>0$ and $\rho<1$.
\end{lemma}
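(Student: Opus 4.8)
The first thing I would stress is that the statement is really a deterministic fact about the \emph{mixing} matrices, not about the stochastic gradients. The augmented matrix $\hat{\mathbf{A}}^k$ is determined entirely by the activated agent $i^k$, the delay vector $d^k$, and the fixed column-stochastic weights $a_{ij}$; the sample-gradient noise enters the recursion $\hat{\mathbf{z}}^{k+1}=\hat{\mathbf{A}}^k\hat{\mathbf{z}}^k+\mathbf{p}^k$ only through the additive term $\mathbf{p}^k$ and never through $\hat{\mathbf{A}}^k$. Hence the family $\{\hat{\mathbf{A}}^k\}$ generated by Algorithm~\ref{alg:pushsum} has exactly the algebraic structure of the matrices in the deterministic push-sum scheme of \cite{tian2018achieving}, and \cite[Lemma 14]{tian2018achieving} applies verbatim. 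Below I sketch the self-contained argument one would give if reproving it from scratch, since this structure is also what drives the later consensus estimates.

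I would first record three structural properties of $\{\hat{\mathbf{A}}^k\}$. (i) \emph{Uniform positivity:} every nonzero entry of $\hat{\mathbf{A}}^k$ is either a $1$ coming from the deterministic shift among the $D$ layers of virtual delay nodes, or a weight $a_{ij}\ge\bar m$ (Assumption~\ref{as:alg}.1); so all nonzero entries are $\ge\bar m$. (ii) \emph{Column stochasticity:} $\mathbf{1}^T\hat{\mathbf{A}}^k=\mathbf{1}^T$, inherited from $\mathbf{A}^T\mathbf{1}=\mathbf{1}$ together with the bookkeeping of the mass counters $\rho_{ij},\tilde\rho_{ij}$, which is constructed precisely so that no mass is created, lost, or double-counted across activations and delayed receptions. (iii) \emph{Support:} the nonzero pattern of $\hat{\mathbf{A}}^k$ is that of $\mathcal{G}$ lifted to the augmented node set (the $m$ real nodes plus $D$ layers of virtual delay nodes), with the virtual layers chained by a shift. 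Using Assumption~\ref{as:alg}.2 (every node activates within any $T$ consecutive steps) and Assumption~\ref{as:alg}.3 (delays $\le D$), I would then show there is a fixed $B:=(m-1)(T+D)$ (or a comparable explicit constant) such that the product $\hat{\mathbf{A}}^{t+B:t}$ has, on the real-node block, a column support reaching every real node from every real node; combined with (i) this gives a uniform lower bound $\bar m^{\,B}$ on the entries of a common positive block of $\hat{\mathbf{A}}^{t+B:t}$. The main obstacle is exactly here: an individual $\hat{\mathbf{A}}^k$ is very far from primitive (most of its rows are a single shifted $1$), so Perron--Frobenius cannot be applied per step; one must trace how mass deposited into a delay layer re-enters the real nodes within at most $D$ steps and is then re-mixed at the next activation, i.e.\ verify that (ii) and (iii) really make the lifted dynamics an honest product of column-stochastic matrices with the claimed reachability.

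With the uniform positive block in hand, I would close with the standard weak-ergodicity contraction. Define, for the column-stochastic product $\hat{\mathbf{A}}^{k:t}$, the coefficient $\delta^{k:t}:=\max_{j,j'}\tfrac12\|\hat{\mathbf{A}}^{k:t}_{\cdot j}-\hat{\mathbf{A}}^{k:t}_{\cdot j'}\|_1$, which is exactly the operator $\ell_1\!\to\!\ell_1$ norm of $\hat{\mathbf{A}}^{k:t}$ restricted to the zero-sum subspace and is submultiplicative over concatenated blocks. The positive block of $\hat{\mathbf{A}}^{t+B:t}$ yields a Dobrushin-type estimate $\delta^{t+B:t}\le 1-\bar m^{\,B}$, and chaining over $\lfloor (k-t)/B\rfloor$ windows gives $\delta^{k:t}\le C'\rho^{k-t}$ with $\rho=(1-\bar m^{\,B})^{1/B}<1$. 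Thus all (real-node) columns of $\hat{\mathbf{A}}^{k:t}$ collapse to one common stochastic vector at a geometric rate; for fixed $k$ the vectors $\hat{\mathbf{A}}^{k:t}_{\cdot j}$ are therefore Cauchy as $t$ decreases, their limit defines $\xi^k$ (a stochastic vector, independent of $j$), and the Cauchy estimate is precisely $|\hat{\mathbf{A}}^{k:t}_{ij}-\xi^k_i|\le C\rho^{k-t}$, as claimed. Once the augmented-system stochasticity and the joint-connectivity window of the preceding paragraph are established, this last step is routine.
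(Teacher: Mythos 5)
The paper offers no proof of this lemma at all: it is invoked verbatim as Lemma 14 of \cite{tian2018achieving}, which is legitimate precisely for the reason you identify up front, namely that the augmented matrices $\hat{\mathbf{A}}^k$ are determined by the activations, delays, and the fixed column-stochastic weights $a_{ij}$ (never by the gradient noise), so the deterministic push-sum result carries over unchanged. Your reconstruction --- column-stochastic lifted matrices with nonzero entries bounded below by $\bar m$, a joint-connectivity window obtained from the bounded-activation and bounded-delay assumptions, a Dobrushin-type contraction of the window products, and the definition of $\xi^k$ as the common limiting column --- is exactly the argument underlying the cited proof, so your proposal is correct in approach and consistent with how the paper handles this statement.
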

\begin{lemma}\cite[Lemma 16]{tian2018achieving}
In the setting of Algorithm 1, there exists a sequence of stochastic vectors 
$\{\mathbf{\psi}^k\}$ such that, for any $k\ge t\in\mathbb{N}$ and
$i,j\in \mathcal{V}$, there holds 
\[
|\hat{\mathbf{W}}^{k:t}-\mathbf{1}(\psi^t)^T|\le C\rho^{k-t},
\]
for some  $C>0$ and $\rho<1$.
\end{lemma}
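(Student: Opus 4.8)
The statement reproduces \cite[Lemma 16]{tian2018achieving}, so the proof is in principle a citation; for completeness I sketch the argument one would reconstruct in the present (stochastic, asynchronous, directed) setting, noting that the mixing matrices $\hat{\mathbf{W}}^k$ do not depend on the gradient noise and hence the result carries over verbatim.

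\textbf{Plan.} The claim is the geometric weak ergodicity of the backward products $\hat{\mathbf{W}}^{k:t}\triangleq\hat{\mathbf{W}}^{k}\hat{\mathbf{W}}^{k-1}\cdots\hat{\mathbf{W}}^{t}$ of the augmented consensus matrices. I would proceed in three steps: (i) record the structural properties of each $\hat{\mathbf{W}}^k$; (ii) prove a uniform ``scrambling over a bounded window'' property; (iii) invoke the standard coefficient-of-ergodicity contraction to get the rate and to define $\psi^t$. For step (i): each $\hat{\mathbf{W}}^k$ acts on the stacked vector $\mathbf{h}^k=[(\mathbf{x}^k)^T,(\mathbf{v}^k)^T,\dots,(\mathbf{v}^{k-D})^T]$; the row of the active node $i^k$ implements (S2), placing the weights $(w_{i^ki^k},\{w_{i^kj}\}_j)$ in the coordinates selected by the counters $\tau^k_{i^kj}$, while all other real rows and the entire virtual block form a deterministic shift $\mathbf{v}^{k}\to\mathbf{v}^{k-1}\to\cdots\to\mathbf{v}^{k-D}$. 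Hence $\hat{\mathbf{W}}^k\mathbf{1}=\mathbf{1}$ (row-stochasticity, from $\mathbf{W}\mathbf{1}=\mathbf{1}$ and the shift), every nonzero entry is either $1$ or lies in $[\bar m,1]$ by Assumption~\ref{as:alg}(1), and the diagonal entry of an active real node is $\ge\bar m$.

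\textbf{Core step.} Using Assumptions~\ref{as:alg}(2)--(3) together with strong connectivity (Assumption~\ref{as:problem}(2)), I would show there is an integer $B=\Theta((T+D)\,m)$, independent of $t$, and a constant $\eta>0$ such that $\hat{\mathbf{W}}^{t+B-1:t}$ has a column bounded entrywise below by $\eta$ for every $t$. The mechanism: within any window of $T$ steps every real node activates, so its freshest iterate enters each out-neighbor's next update; each delay is $\le D$, so the $\tau$-counters select an iterate at most $T+D$ steps old; strong connectivity then lets a fixed reference node's influence reach every node in $\le m$ such ``super-steps'', with the virtual chains merely transporting the mass forward without loss. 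Multiplying the corresponding uniformly positive entries ($\ge\bar m^{B}$) yields $\eta$. Finally, this scrambling bound means the ergodic coefficient of $\hat{\mathbf{W}}^{t+B-1:t}$ is at most $1-\eta$ uniformly in $t$; chaining over $\lfloor(k-t)/B\rfloor$ windows gives submultiplicative decay, so $\hat{\mathbf{W}}^{k:t}$ converges as $k\to\infty$ to a rank-one matrix $\mathbf{1}(\psi^t)^T$ with $\psi^t\ge 0$, $\mathbf{1}^T\psi^t=1$ (the vector $\psi^t$ being a function of the realized activation/delay sequence), and $\|\hat{\mathbf{W}}^{k:t}-\mathbf{1}(\psi^t)^T\|_\infty\le C\rho^{k-t}$ with $\rho=(1-\eta)^{1/B}<1$ and $C=(1-\eta)^{-1}$.

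\textbf{Main obstacle.} The delicate part is the core step: one must translate the asynchronous, delayed bookkeeping (the counters $\tau^k_{i^kj}$ and the virtual-node shift that encode exactly which past iterate is actually mixed in) into the clean statement that the \emph{time-aggregated influence graph} over any $B$ consecutive global iterations is strongly connected with a uniform lower bound on the edge weights. This combinatorial argument is the heart of \cite[Lemma 16]{tian2018achieving}; once it is in place, the passage to geometric ergodicity via ergodic coefficients is routine, and the stochasticity of the gradients plays no role since $\hat{\mathbf{W}}^k$ is determined solely by the activation/delay process.
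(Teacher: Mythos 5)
Your proposal is correct and matches the paper's treatment: the paper establishes this lemma purely by citation to \cite[Lemma 16]{tian2018achieving}, and your reconstruction (augmented row-stochastic matrices with virtual-node shifts, uniform scrambling over a bounded window from Assumptions~\ref{as:alg}(1)--(3) and strong connectivity, then the coefficient-of-ergodicity contraction yielding the rank-one limit $\mathbf{1}(\psi^t)^T$ with geometric rate) is exactly the argument underlying the cited result. You also correctly note the key point that $\hat{\mathbf{W}}^k$ depends only on the activation/delay process, so the gradient noise in the present stochastic setting does not affect the lemma.
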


Finally, we define a new vector $\bar{\mathbf{z}}^k_{i^k}$. This represents
the update that would be made if actual rather that stochastic
gradients were computed, i.e, $\bar{\mathbf{z}}_{(i)}^k=\nabla f_i(\mathbf{x}_i^0)+\sum_{t=k:\, i^k=i} (\nabla f_i(\mathbf{x}_{i}^{t+1})-f_i(\mathbf{x}_{i}^{t}))$. 

It can be seen that,
\begin{equation}\label{eq:zzbar}
\mathbb{E}\left[\bar{\mathbf{z}}^k_{i^k}-\mathbf{z}^k_{i^k}\right] = 0 \quad \text{and}\quad 
\mathbb{E}\left[\left\|\bar{\mathbf{z}}^k_{i^k}-\mathbf{z}^k_{i^k}\right\|^2\right] \le m\sigma^2.
\end{equation}

To study convergence of Algorithm 1, we introduce  the following error terms, defining the gradient
tracking, consensus, and gradient norm errors for the
evolving iterations:
\begin{equation}\label{eq:errterms}
\begin{array}{l}
E_t^k = \left\|\bar{\mathbf{z}}^k_{(i^k)}-\xi^{k-1}_{i^k} \left(\mathbf{1}\mathbf{1}^T\otimes \mathbf{I}_n\right) \bar{\mathbf{z}}^k\right\|^2,\\
E^k_c=\left\|\mathbf{h}^k-\mathbf{1}_{m}\otimes \mathbf{x}^k_\psi\right\|^2,\,E^k_z=\left\|\bar{\mathbf{z}}^k_{(i^k)}\right\|^2.
\end{array}
\end{equation}
Note that,
\[
\mathbb{E}|\mathbf{z}^k_{i^k}|\le \sqrt{E_z^k}+\mathbb{E}|\mathbf{z}^k_{i^k}-\bar{\mathbf{z}}^k_{i^k}| \le \sqrt{E_z^k}+\sqrt{m}\sigma.
\]

\subsection{Convergence Theory}

The proof of the main convergence theory begins similarly as in~\cite{tian2018achieving}, however, subsequently changes significantly in order to account for the noise and then also set up the possibility of deriving specific convergence rates for the optimization, consensus, and tracking errors.

\begin{theorem}\label{th:conv}
Let Assumptions~\ref{as:problem} and~\ref{as:alg} hold.

Assume that the stepsize sequence $\{\gamma^k\}$ satisfies,
\[
\begin{array}{l}
\sum\limits_{k=1}^\infty \gamma^k = \infty,\,\,\sum\limits_{k=1}^\infty (\gamma^k)^2 < \infty,
\\
\qquad \gamma^0 \le \min\left\{\frac{1}{1+\eta},\frac{1}{4(L+C^c_2+C^t_2)} \right\}
\end{array}
\]
where $C^c_2$ and $C^t_2$ are constants to be defined in the proof.

The merit function $M(\bar{\mathbf{z}}^k,\mathbf{h}^k,\mathbf{x}^k_\psi) = \mathbb{E}\left[E^k_t+E^k_c+E^k_z\right]$ is sublinearly
convergent with the standard ergodic rate,
\[
\sum\limits_{l=0}^k \gamma^l M(\bar{\mathbf{z}}^l,\mathbf{h}^l,\mathbf{x}^l_\psi)
\le \frac{C}{\sum\limits_{l=0}^k \gamma^l}
\]
for some constant $C>0$.
\end{theorem}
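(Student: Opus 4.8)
# Proof Proposal

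The plan is to construct a Lyapunov-type argument built on a coupled system of inequalities for the three error quantities $E^k_t$ (gradient-tracking error), $E^k_c$ (consensus error), and $E^k_z$ (gradient-norm error), then combine them through the merit function $M$. The starting point, following \cite{tian2018achieving}, is a descent-type recursion: using the $L$-smoothness of $F$ from Assumption~\ref{as:problem} and the stochastic gradient step (S1), I would derive an inequality of the form $\mathbb{E}[F(\mathbf{x}^{k+1}_\psi)] \le \mathbb{E}[F(\mathbf{x}^k_\psi)] - c_1\gamma^k \mathbb{E}[E^k_z] + c_2\gamma^k\mathbb{E}[E^k_c] + c_3\gamma^k\mathbb{E}[E^k_t] + c_4(\gamma^k)^2(\sigma^2 + \text{bounds})$, where $\mathbf{x}^k_\psi$ is the $\psi$-weighted average consensus point. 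The crucial new feature relative to the deterministic case is that every place a gradient appears, it is now a noisy sample; here I would invoke \eqref{eq:zzbar} and the bounded-variance condition \eqref{eq:stochest} to replace $\mathbf{z}^k_{i^k}$ by $\bar{\mathbf{z}}^k_{i^k}$ up to an additive $m\sigma^2$ term, and use the conditional-unbiasedness so that cross terms between the noise and the (measurable) iterates vanish in expectation.

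Next I would establish the contraction estimates for the consensus and tracking errors. For $E^k_c$, the recursion $\mathbf{h}^{k+1} = \hat{\mathbf{W}}^k(\mathbf{h}^k + \delta^k)$ together with Lemma~1.2 (the geometric mixing of $\hat{\mathbf{W}}^{k:t}$ toward $\mathbf{1}(\psi^t)^T$) gives, after summing a geometric series in $\rho$ over a window of length $T+D$ (using the bounded-activation Assumption~\ref{as:alg}.2 and bounded-delay Assumption~\ref{as:alg}.3), a bound $\mathbb{E}[E^{k+1}_c] \le (1-\mu)\mathbb{E}[E^k_c] + C^c_1(\gamma^k)^2\mathbb{E}[E^k_z] + C^c_2(\gamma^k)^2 + C^c_1(\gamma^k)^2 m\sigma^2$ for some $\mu \in (0,1)$; the $\delta^k$ increments scale with $\gamma^k \|\mathbf{z}\|$, which is why the squared stepsize and the gradient-norm error enter. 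Analogously, for $E^k_t$, I would use the recursion $\hat{\mathbf{z}}^{k+1} = \hat{\mathbf{A}}^k\hat{\mathbf{z}}^k + \mathbf{p}^k$ and Lemma~1.1, noting that $\mathbf{p}^k$ is the change induced by the fresh gradient correction $\tilde\nabla f_{i^k}(\mathbf{x}^{k+1}_{i^k}) - \tilde\nabla f_{i^k}(\mathbf{x}^k_{i^k})$, whose norm is controlled by $L$ times $\|\mathbf{x}^{k+1}_{i^k} - \mathbf{x}^k_{i^k}\| \lesssim \gamma^k\|\mathbf{z}^k_{i^k}\| + (\text{consensus displacement})$, plus a stochastic part bounded in mean square by $\sigma^2$. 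This yields $\mathbb{E}[E^{k+1}_t] \le (1-\nu)\mathbb{E}[E^k_t] + C^t_1(\gamma^k)^2(\mathbb{E}[E^k_z] + \mathbb{E}[E^k_c]) + C^t_2(\gamma^k)^2 + C^t_1(\gamma^k)^2 m\sigma^2$.

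With these three inequalities in hand, I would form the aggregate $\Phi^k \triangleq \mathbb{E}[F(\mathbf{x}^k_\psi)] - F^\star + \alpha\,\mathbb{E}[E^k_c] + \beta\,\mathbb{E}[E^k_t]$ for suitable positive weights $\alpha,\beta$ chosen so that the positive $E_c$ and $E_t$ terms coming from the descent inequality are dominated by the contraction gains $-\alpha\mu E_c$ and $-\beta\nu E_t$; this is exactly where the stepsize cap $\gamma^0 \le \min\{1/(1+\eta), 1/(4(L+C^c_2+C^t_2))\}$ is used, ensuring the coefficient multiplying $\gamma^k E^k_z$ stays negative and the coefficients on $E^k_c$, $E^k_t$ stay negative after aggregation. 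Telescoping $\Phi^{k+1} \le \Phi^k - c\,\gamma^k M(\bar{\mathbf{z}}^k,\mathbf{h}^k,\mathbf{x}^k_\psi) + c'(\gamma^k)^2(\sigma^2 m + 1)$ from $l=0$ to $k$, using $F$ bounded below (Assumption~\ref{as:problem}.1) and $\sum(\gamma^l)^2 < \infty$ to bound the right side by a constant, and dividing by $\sum_{l=0}^k\gamma^l$ gives the claimed ergodic rate. I expect the main obstacle to be the bookkeeping for the asynchrony and delays in the tracking-error recursion: unlike a synchronous scheme, a single global iteration updates only one agent, the mass counters $\rho_{ij}, \tilde\rho_{ij}$ must be shown to conserve the total tracked gradient mass exactly (so that $\sum_i \mathbf{z}^k_i$ telescopes to $\sum_i\bar{\nabla}f_i$ plus noise), and the delay windows mean the contraction in Lemma~1.1 only kicks in after $\Theta(T+D)$ steps — so the per-step inequalities above are really shorthand for block inequalities over such windows, and making the weighting argument survive that blocking while still yielding a clean $\gamma^k$-weighted telescoping sum is the delicate part.
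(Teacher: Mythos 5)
Your proposal is correct in its overall architecture and matches the paper's skeleton: a Descent Lemma applied at the $\psi$-weighted point $\mathbf{x}^k_\psi$, replacement of $\mathbf{z}^k_{(i^k)}$ by $\bar{\mathbf{z}}^k_{(i^k)}$ via unbiasedness and bounded variance as in \eqref{eq:zzbar}, error bounds for $E^k_c$ and $E^k_t$ driven by the geometric mixing of $\hat{\mathbf{W}}$ and $\hat{\mathbf{A}}$, and a stepsize cap that keeps the net coefficient on $E^k_z$ proportional to $\gamma^k$ before telescoping and dividing by $\sum_l \gamma^l$. Where you diverge is in how the consensus and tracking errors are aggregated. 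You posit per-iteration contraction recursions $\mathbb{E}[E^{k+1}_c]\le(1-\mu)\mathbb{E}[E^k_c]+O((\gamma^k)^2(\cdot))$ (and similarly for $E^k_t$) and then build a Lyapunov function $\Phi^k$ with weights $\alpha,\beta$; as you yourself flag, such one-step contraction is false under asynchrony and bounded delays (only one agent moves per global tick), so your inequalities must be re-proved as block inequalities over windows of length $\Theta(T+D)$, and the weighted telescoping must be shown to survive the blocking --- that is real additional work. The paper avoids this entirely: it uses the convolution-form bounds of \cite[Propositions 17 and 18]{tian2018achieving}, which bound $\mathbb{E}[\sqrt{E^k_c}]$ and $\mathbb{E}[\sqrt{E^k_t}]$ by geometrically weighted sums over \emph{all} past iterations (see \eqref{eq:errc-b-e} and \eqref{eq:errt-e}), and then invokes \cite[Lemma 26]{tian2018achieving} to convert these directly into the summed bounds \eqref{eq:errc-b2-e} and \eqref{eq:errt2-e}; these summed bounds are substituted into the summed descent inequality \eqref{eq:sumconv1} (after adding $\sum_l \gamma^l(E^l_t+E^l_c)$ to both sides and using $\gamma^l/\eta+\gamma^l\le 1$), with no Lyapunov weighting or windowed contraction needed. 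Both routes reach the same conclusion; yours is conceptually standard but leaves the delicate asynchrony bookkeeping to be redone, while the paper's leans on the cited results to absorb exactly that difficulty, which is the pragmatic reason for its choice.
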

\begin{proof}
Consider the application of the Descent Lemma to $F$ applied at $x_\psi^k$ and $x_\psi^{k+1}$.
\[
\begin{array}{l}
\mathbb{E}\left[F(\mathbf{x}^{k+1}_{(\psi)})\right]\le 
\mathbb{E}\left[F(\mathbf{x}^k_{\psi})\right]+
\gamma^k \psi^k_{i^k}\mathbb{E}\left[\left\langle \nabla F(\mathbf{x}^k_\psi),-\mathbf{z}^k_{(i^k)}\right\rangle\right]
\\ \qquad\qquad +\frac{L(\gamma^k \psi^k_{i^k})^2}{2}\mathbb{E}\left[\left\|\mathbf{z}^k_{(i^k)}\right\|^2\right] \\ 
\qquad \le 
\mathbb{E}\left[F(\mathbf{x}^k_{\psi})\right]+
\gamma^k \psi^k_{i^k}\mathbb{E}\left[\left\langle \nabla F(\mathbf{x}^k_\psi),-\bar{\mathbf{z}}^k_{(i^k)}\right\rangle\right]
\\ \qquad\qquad +\gamma^k \psi^k_{i^k}\mathbb{E}\left[\left\langle \nabla F(\mathbf{x}^k_\psi),\bar{\mathbf{z}}^k_{(i^k)}-\mathbf{z}^k_{(i^k)}\right\rangle\right]
\\ \qquad\qquad +L(\gamma^k \psi^k_{i^k})^2\mathbb{E}\left[\left\|\bar{\mathbf{z}}^k_{(i^k)}\right\|^2+\left\|\mathbf{z}^k_{(i^k)}-\bar{\mathbf{z}}^k_{(i^k)}\right\|^2\right]\\
\qquad \le \mathbb{E}\left[F(\mathbf{x}^k_\psi)\right]+L(\gamma^k)^2\mathbb{E}\left[\left\|\bar{\mathbf{z}}^k_{(i^k)}\right\|^2\right]
\\ \qquad\qquad +\gamma^k\psi^k_{i^k}\mathbb{E}\left[\left\langle (\xi^{k-1}_{i^k})^{-1} \bar{\mathbf{z}}^k_{(i^k)},-\bar{\mathbf{z}}^k_{(i^k)}\right\rangle\right] \\\qquad\qquad
+\gamma^k \psi^k_{i^k}\mathbb{E}\left[\left\langle \left(\mathbf{1}\mathbf{1}^T\otimes \mathbf{I}_n \right)\bar{\mathbf{z}}^k-(\xi^{k-1}_{i^k})^{-1} \bar{\mathbf{z}}^k_{(i^k)},-\bar{\mathbf{z}}^k_{(i^k)}\right\rangle\right]\\\qquad\qquad
+\gamma^k \psi^k_{i^k}\mathbb{E}\left[\left\langle \nabla F(\mathbf{x}^k_\psi)- \left(\mathbf{1}\mathbf{1}^T\otimes \mathbf{I}_n \bar{\mathbf{z}}^k\right),-\bar{\mathbf{z}}^k_{(i^k)}\right\rangle\right] \\ \qquad \qquad+\gamma^k \psi^k_{i^k}\mathbb{E}\left[\left\langle \nabla F(\mathbf{x}^k_\psi),\bar{\mathbf{z}}^k_{(i^k)}-\mathbf{z}^k_{(i^k)}\right\rangle\right]\\ \qquad\qquad +L(\gamma^k \psi^k_{i^k})^2\mathbb{E}\left[\left\|\mathbf{z}^k_{(i^k)}-\bar{\mathbf{z}}^k_{(i^k)}\right\|^2\right]
\\
\qquad \le \mathbb{E}\left[F(\mathbf{x}^k_\psi)\right]+L(\gamma^k)^2\mathbb{E}\left[\left\|\bar{\mathbf{z}}^k_{(i^k)}\right\|^2\right]
\\ \qquad\qquad +\gamma^k\psi^k_{i^k}\mathbb{E}\left[\left\langle (\xi^{k-1}_{i^k})^{-1} \bar{\mathbf{z}}^k_{(i^k)},-\bar{\mathbf{z}}^k_{(i^k)}\right\rangle\right] \\\qquad\qquad\qquad
+\gamma^k \psi^k_{i^k}\left(\frac{\beta_1}{2}E^k_t+\frac{1}{2\beta_1} E^k_z\right)
\\\qquad\qquad\qquad
+\gamma^k \psi^k_{i^k}\left(\frac{\beta_1Lm}{2}E^k_c+\frac{1}{2\beta_1} E^k_z\right) \\ \qquad\qquad \qquad+\gamma^k \psi^k_{i^k}\mathbb{E}\left[\left\langle \nabla F(\mathbf{x}^k_\psi),\bar{\mathbf{z}}^k_{(i^k)}-\mathbf{z}^k_{(i^k)}\right\rangle\right]\\ \qquad\qquad +L(\gamma^k \psi^k_{i^k})^2\mathbb{E}\left[\left\|\mathbf{z}^k_{(i^k)}-\bar{\mathbf{z}}^k_{(i^k)}\right\|^2\right] 
\end{array}
\]
where in the last inequality we used the Cauchy-Schwartz and
Young's inequality,
$ab\le \frac{\beta}{2}a^2+\frac{1}{2\beta}b^2$, twice.

Set $\beta_1=\beta_2=2/\eta$, then,
\begin{equation}\label{eq:descbetaeta}
\begin{array}{l}
\mathbb{E}\left[F(\mathbf{x}^{k+1}_{(\psi)})\right] \le \mathbb{E}\left[F(\mathbf{x}^k_\psi)\right]-\left(\frac{\eta\gamma^k}{2}+L(\gamma^k)^2\right)
\mathbb{E}\left[\left\|\bar{\mathbf{z}}^k_{(i^k)}\right\|^2\right] \\ 
\qquad\qquad\qquad+\frac{\gamma^k}{\eta} E^k_t+\frac{\gamma^k}{\eta}E^k_c
\\ 
\qquad\qquad\qquad+\gamma^k \psi^k_{i^k}\mathbb{E}\left[\left\langle \nabla F(\mathbf{x}^k_\psi),\bar{\mathbf{z}}^k_{(i^k)}-\mathbf{z}^k_{(i^k)}\right\rangle\right]\\ \qquad\qquad \qquad+L(\gamma^k )^2m\sigma^2 
\end{array}
\end{equation}

Now it holds that by~\cite[Proposition 17]{tian2018achieving},
\begin{equation}\label{eq:errc-b-e}
\begin{array}{l}
\mathbb{E}\left[\sqrt{E^{k}_c}\right] \le C_2 \rho^k \mathbb{E}\left[\sqrt{E^0_c}\right]+C_2 \sum_{l=0}^{k-1} \rho^{k-l}\gamma^l 
\mathbb{E}|\mathbf{z}^l_{i^l}| \\ \qquad \le C_2 \rho^k \mathbb{E}\left[\sqrt{E^0_c}\right]+C_2 \sum_{l=0}^{k-1} \rho^{k-l}\gamma^l 
\left(\mathbb{E} \sqrt{E^l_z}+\sqrt{m}\sigma\right)
\end{array}
\end{equation}
which implies, by~\cite[Lemma 26]{tian2018achieving} that, after taking full expectations, that there exist $C^c_1$ and $C^c_2$ such
that,
\begin{equation}\label{eq:errc-b2-e}
\sum\limits_{l=0}^k \mathbb{E}\left[E^{l}_c\right] \le C^c_1 +C^c_2 \sum_{l=0}^{k} (\gamma^l)^2 \left(\mathbb{E}\left[E^l_z\right]+m\sigma^2\right)
\end{equation}

Similarly, from the proof of~\cite[Proposition 18]{tian2018achieving}, it can be seen that,
\begin{equation}\label{eq:errt-e}
\begin{array}{l}
\mathbb{E}\left[\sqrt{E^{k}_t}\right] \le 3 C_0 C_L\sum_{l=0}^{k-1}\rho^{k-l}\mathbb{E}\left[\sqrt{E^l_c}+\gamma^l(\sqrt{E^l_z}+\sqrt{m}\sigma)\right]\\ \qquad\qquad +C_0 \rho^k
\|\mathbf{z}^0\|
\end{array}
\end{equation}
and so, again as in~\cite[Lemma 26]{tian2018achieving} 
\begin{equation}\label{eq:errt2-e}
\sum\limits_{l=0}^k \mathbb{E}\left[E^{l}_t\right] \le C^t_1 +C^t_2 \sum_{l=0}^{k} (\gamma^l)^2 \left(\mathbb{E}\left[E^l_z\right]+m\sigma^2\right)
\end{equation}

Now, summing up~\eqref{eq:descbetaeta}, taking full expectations
we get, using~\eqref{eq:zzbar}
\begin{equation}\label{eq:sumconv1}
\begin{array}{l}
\sum\limits_{l=0}^k \left(\frac{\gamma^l\eta}{2}+L(\gamma^l)^2\right)\mathbb{E}\left\|\bar{\mathbf{z}}^l_{(i^l)}\right\|^2
\le F(\mathbf{x}^0)-F_m \\ \qquad +\sum\limits_{l=0}^k\left[\frac{\gamma^l}{\eta} \mathbb{E}[E^l_t]+\frac{\gamma^l}{\eta}\mathbb{E}[E^l_c]\right]+\sum\limits_{l=0}^k (\gamma^l)^2 L m\sigma^2
\end{array}
\end{equation}
Now add $\sum\limits_{l=0}^k\left[\gamma^l \mathbb{E}[E^l_t]+\gamma^l\mathbb{E}[E^l_c]\right]$ to both sides,
use $\frac{\gamma^l}{\eta}+\gamma^l \le 1$
and plug in~\eqref{eq:errc-b2-e} and~\eqref{eq:errt2-e} to get,
\[
\begin{array}{l}
\sum\limits_{l=0}^k \left(\frac{\gamma^l\eta}{2}-(L+C^c_2+C^t_2)(\gamma^l)^2\right)\mathbb{E}\left\|\bar{\mathbf{z}}^l_{(i^l)}\right\|^2 \\ \qquad\qquad
+\sum\limits_{l=0}^k \left[\gamma^l E^l_t+\gamma^l E^l_c\right] \\ \qquad
\le F(\mathbf{x}^0)-F_m +C_1^c+C_1^t+ C_\sigma \sum\limits_{l=0}^k(\gamma^l)^2
\end{array}
\]
and so for sufficiently small $\gamma^0$, we have,
\[
\sum\limits_{l=0}^k \gamma^l \left[E^l_z+ E^l_t+ E^l_c\right] \\ \qquad
\le F(\mathbf{x}^0)-F_m + C_\sigma m\sigma^2 \sum\limits_{l=0}^k(\gamma^l)^2
\]
and the proof claim follows from the general
stepsize conditions.
\end{proof}

\begin{corollary}
With the specific step-size choice of,
\[
\gamma^k = \frac{1}{k^\alpha},\, \alpha\in(1/2,1]
\]
we have the following convergence rates,
\[
\begin{array}{l}
M(\bar{\mathbf{z}}^k,\mathbf{h}^k,\mathbf{x}^k_\psi) = o\left(\frac{1}{k^{1-\alpha}}\right),\,
\mathbb{E}\left[E^k_t\right] = o\left(\frac{1}{k}\right),\, \\ \quad 
\mathbb{E}\left[E^k_c\right] = o\left(\frac{1}{k}\right),\,
\mathbb{E}\left[E^k_z\right] = o\left(\frac{1}{k^{1-\alpha}}\right)
\end{array}
\]
\end{corollary}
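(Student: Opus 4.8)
The plan is to specialize the estimates already produced inside the proof of Theorem~\ref{th:conv} to $\gamma^k=k^{-\alpha}$ and then extract a pointwise decay rate for each of $\mathbb{E}[E_c^k]$, $\mathbb{E}[E_t^k]$, $\mathbb{E}[E_z^k]$ in turn; the rate for $M=\mathbb{E}[E_t^k+E_c^k+E_z^k]$ is then the slowest of the three. First note $\alpha>1/2$ gives $2\alpha>1$, so $\sum_k(\gamma^k)^2<\infty$, while $\Gamma_k\triangleq\sum_{l=1}^k\gamma^l=\Theta(k^{1-\alpha})$ for $\alpha<1$ (and $\Gamma_k\to\infty$, $\Gamma_k=\Theta(\log k)$, for $\alpha=1$, where the asserted rate reduces to $o(1)$); thus Theorem~\ref{th:conv} applies and its proof yields $\sup_k\sum_{l=0}^k\gamma^l\,\mathbb{E}[E_t^l+E_c^l+E_z^l]\le\bar C<\infty$. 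Since $\gamma^l\le\gamma^1$ for $l\ge1$, this gives $\sum_l(\gamma^l)^2\mathbb{E}[E_z^l]\le\gamma^1\bar C<\infty$, and plugging this together with $\sum_l(\gamma^l)^2<\infty$ into \eqref{eq:errc-b2-e} and \eqref{eq:errt2-e} gives $\sum_l\mathbb{E}[E_c^l]<\infty$ and $\sum_l\mathbb{E}[E_t^l]<\infty$. Next I would telescope \eqref{eq:descbetaeta} (dropping the nonpositive $\mathbb{E}\|\bar{\mathbf z}^k_{(i^k)}\|^2$ term and using that the cross term has zero mean by \eqref{eq:zzbar}, as in the passage to \eqref{eq:sumconv1}) to bound $\mathbb{E}[F(\mathbf x_\psi^k)]$ uniformly in $k$; $L$-smoothness with $F\ge F_m$ then gives $\sup_k\mathbb{E}\|\nabla F(\mathbf x_\psi^k)\|^2<\infty$, and since $\|\bar{\mathbf z}^k_{(i^k)}\|\lesssim\|\nabla F(\mathbf x_\psi^k)\|+\sqrt{E_t^k}+\sqrt{E_c^k}$ (write $\nabla F(\mathbf x_\psi^k)$ as the sum of the $\nabla f_i(\mathbf x_i^k)$ up to $L$-Lipschitz corrections bounded by $\sqrt{E_c^k}$, and $\bar{\mathbf z}^k_{(i^k)}$ as its $\xi^{k-1}_{i^k}$-weighted copy up to the tracking residual $\sqrt{E_t^k}$, with $\xi^{k-1}$ bounded away from $0$ on the strongly connected graph), the summability of $\mathbb{E}[E_t^l+E_c^l]$ gives $\bar E_z\triangleq\sup_k\mathbb{E}[E_z^k]<\infty$.

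With $\bar E_z$ available, the consensus and tracking rates follow routinely. I would square the unrolled geometric recursion underlying \eqref{eq:errc-b-e} (\cite[Proposition 17]{tian2018achieving}), apply Cauchy--Schwarz to the convolution $\sum_{l<k}\rho^{k-l}\gamma^l\|\mathbf z^l_{i^l}\|$, and take full expectations using $\mathbb{E}\|\mathbf z^l_{i^l}\|^2\le 2\mathbb{E}[E_z^l]+2m\sigma^2\le 2\bar E_z+2m\sigma^2$ (by \eqref{eq:zzbar}); this yields $\mathbb{E}[E_c^k]\lesssim\rho^{2k}+\sum_{l<k}\rho^{k-l}(\gamma^l)^2(\bar E_z+m\sigma^2)$. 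Invoking the elementary fact that a geometric convolution of an $O(l^{-\beta})$ sequence is $O(k^{-\beta})$ (split at $l=k/2$: the tail $l>k/2$ contributes $O(k^{-\beta})$ and the head $O(k\rho^{k/2})$), this gives $\mathbb{E}[E_c^k]=O(k^{-2\alpha})$; feeding this, together with $(\gamma^l)^2(\bar E_z+m\sigma^2)=O(l^{-2\alpha})$, into the analogous squared form of \eqref{eq:errt-e} (\cite[Proposition 18]{tian2018achieving}) gives $\mathbb{E}[E_t^k]=O(k^{-2\alpha})$. Since $2\alpha>1$, both are $o(1/k)$.

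The remaining term $E_z^k$ is the crux. From \eqref{eq:descbetaeta}--\eqref{eq:sumconv1} we have $\sum_l\gamma^l\mathbb{E}[E_z^l]<\infty$, and by the reverse estimate $\|\nabla F(\mathbf x_\psi^k)\|\lesssim\|\bar{\mathbf z}^k_{(i^k)}\|+\sqrt{E_t^k}+\sqrt{E_c^k}$ also $\sum_l\gamma^l\mathbb{E}\|\nabla F(\mathbf x_\psi^l)\|^2<\infty$. Because the $\psi$-weighted iterate moves by $\|\mathbf x_\psi^{k+1}-\mathbf x_\psi^k\|=\gamma^k\psi^k_{i^k}\|\mathbf z^k_{(i^k)}\|\le\gamma^k\|\mathbf z^k_{(i^k)}\|$, one gets $\mathbb{E}\|\mathbf x_\psi^{k+1}-\mathbf x_\psi^k\|^2\le(\gamma^k)^2(2\bar E_z+2m\sigma^2)$, and $L$-smoothness with Cauchy--Schwarz then gives the slow-variation bound $|\mathbb{E}\|\nabla F(\mathbf x_\psi^{k+1})\|^2-\mathbb{E}\|\nabla F(\mathbf x_\psi^k)\|^2|\le C'\gamma^k$. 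I would then apply the rate form of the weighted-summation lemma of \cite[Lemma 26]{tian2018achieving} --- a nonnegative sequence that is $\gamma$-weighted summable with $O(\gamma^k)$-bounded increments decays faster than $1/\Gamma_k$ --- to conclude $\mathbb{E}\|\nabla F(\mathbf x_\psi^k)\|^2=o(k^{-(1-\alpha)})$; combined with $\mathbb{E}[E_t^k+E_c^k]=O(k^{-2\alpha})=o(k^{-(1-\alpha)})$ (as $2\alpha>1\ge1-\alpha$) this gives $\mathbb{E}[E_z^k]=o(k^{-(1-\alpha)})$, and hence $M=\mathbb{E}[E_t^k+E_c^k+E_z^k]=o(k^{-(1-\alpha)})$.

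The main obstacle is exactly this last step. The consensus and tracking errors contract geometrically through the mixing matrices, so once $\sup_k\mathbb{E}[E_z^k]$ is known to be finite their $O(k^{-2\alpha})$ rates are forced by a convolution estimate; but $E_z^k$ is essentially the squared true-gradient norm, which decreases only through the telescoping of the descent inequality, so promoting the mere summability $\sum_l\gamma^l\mathbb{E}[E_z^l]<\infty$ to a pointwise $o(1/\Gamma_k)$ rate requires both the a priori bound $\sup_k\mathbb{E}[E_z^k]<\infty$ (to control the per-step variation) and the slow-variation summation lemma, and is the part of the argument most sensitive to the details of the step-size. A secondary subtlety is that the active index $(i^k)$ in $E_z^k$ changes each iteration, so the slow-variation estimate has to be carried out for $\mathbf x_\psi^k$ --- whose increment is genuinely $O(\gamma^k)$ --- and only afterwards transferred back to $E_z^k$ via the rates already obtained for $E_t^k$ and $E_c^k$.
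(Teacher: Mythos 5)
Much of your intermediate machinery is sound, and for the consensus and tracking errors it is in fact sharper than what the paper does: your chain ``Theorem~\ref{th:conv} $\Rightarrow\sum_l\gamma^l\mathbb{E}[E^l_z]<\infty\Rightarrow$ right-hand sides of \eqref{eq:errc-b2-e}, \eqref{eq:errt2-e} finite $\Rightarrow\sum_l\mathbb{E}[E^l_c],\ \sum_l\mathbb{E}[E^l_t]<\infty$'' is exactly the paper's step, and your convolution upgrade to pointwise $O(k^{-2\alpha})$ is a legitimate strengthening (with the caveat that you must square the \emph{pathwise} recursions behind \eqref{eq:errc-b-e} and \eqref{eq:errt-e} before taking expectations: squaring a bound on $\mathbb{E}\sqrt{E^k_c}$ only controls $(\mathbb{E}\sqrt{E^k_c})^2\le\mathbb{E}[E^k_c]$, which is the wrong direction). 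The paper itself is far terser: it reads each rate directly off the corresponding summability statement --- $\sum_l\gamma^lM^l\le C$ from Theorem~\ref{th:conv} gives $\gamma^kM^k=o(1/k)$ and hence $M^k=o(k^{\alpha-1})$; $\sum_l\mathbb{E}[E^l_c]$, $\sum_l\mathbb{E}[E^l_t]<\infty$ from \eqref{eq:errc-b2-e}, \eqref{eq:errt2-e} give the $o(1/k)$ rates; and the finiteness of the right-hand side of \eqref{eq:sumconv1} gives $\mathbb{E}[E^k_z]=o(k^{\alpha-1})$.

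The genuine gap is your final step for $E^k_z$ (and hence for $M$). The ``rate form of the weighted-summation lemma'' you attribute to \cite[Lemma 26]{tian2018achieving} --- that a nonnegative sequence $a_k$ with $\sum_k\gamma^ka_k<\infty$ and $|a_{k+1}-a_k|\le C\gamma^k$ must satisfy $a_k=o(1/\Gamma_k)$ --- is not what that lemma provides (it is a geometric-convolution-to-summability estimate, used to obtain \eqref{eq:errc-b2-e} and \eqref{eq:errt2-e}), and, more importantly, the statement is false in general. Weighted summability plus $O(\gamma^k)$ increments yields only $a_k\to0$, or $\min_{l\le k}a_l=o(1/\Gamma_k)$, not a pointwise rate: with $\gamma^k=k^{-\alpha}$, $\alpha\in(1/2,1)$, place triangular bumps of height $h_j=j^{-0.6}$ at times $k_j=2^j$, built from increments of size $\gamma^k$; each bump contributes $O(h_j^2)$ to $\sum_k\gamma^ka_k$, which is summable, yet $a_{k_j}\Gamma_{k_j}\approx j^{-0.6}2^{j(1-\alpha)}\to\infty$. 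So your argument does not establish $\mathbb{E}[E^k_z]=o(k^{-(1-\alpha)})$, and therefore not the claimed rate for $M$ either. (To be fair, the paper's own inference from summability to a pointwise $o$ rate implicitly needs a monotonicity or regularity proviso of the classical kind ``$a_k$ nonincreasing and $\sum_ka_k<\infty\Rightarrow ka_k\to0$''; but it does not rest on the slow-variation lemma you invoke. Along your route, what you can rigorously conclude is the best-iterate statement $\min_{l\le k}\mathbb{E}[E^l_z]=o(1/\Gamma_k)$, or the rate along a subsequence, unless you supply an additional argument controlling upward excursions of $\mathbb{E}[E^k_z]$.)
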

\begin{proof}
The first follows directly from Theorem~\ref{th:conv}, i.e., the right hand side is bounded and thus the sum on the left must be bounded, and so $\gamma^k M^k=o\left(\frac{1}{k}\right)$, thus the form of
$\gamma^k$ implies the rate for $M^k$. 

The rates for $\mathbb{E}\left[E^k_c\right]$
and $\mathbb{E}\left[E^k_t\right]$ follow from
~\eqref{eq:errc-b2-e} and~\eqref{eq:errt2-e}, respectively, as the right hand side is bounded
and thus the sum on the left must be. 

Finally the
bound for $\mathbb{E}\left[E^k_t\right]$ follows
from the finiteness of the right hand side of ~\eqref{eq:sumconv1} and the form of $\gamma^k$.
\end{proof}
We note how, similarly as in ~\cite{pu2019sharp}, the consensus errors converge quicker than the optimization and asymptotically the optimization dominates the overall convergence rate, in this case arbitrarily close to the standard SGD nonconvex rate of $O\left(\frac{1}{\sqrt{k}}\right)$~\cite{ghadimi2013stochastic}. 

\section{Experiments} 
In this section we aim to numerically study different aspects of our proposed Algorithm~\ref{alg:pushsum} on a non-convex optimization task.

The non-convex optimization task is image classification using neural network on MNIST \cite{lecun2010mnist} dataset where we aim to solving it using Algorithm~\ref{alg:pushsum} on a randomly generated digraph. We randomly shuffle the dataset and uniformly partition it across the nodes where the data partitions are disjoint. In our randomly generated digraph structure each agent $i$ sends its updates to $3$ out-neighbors; one of them is the next agent $i+1$ in the cycle order and the other two are selected randomly from a uniform distribution, following \cite{tian2018achieving}. We use the convolutional neural network (CNN) architecture used in Tensorflow tutorial \cite{Tensorflow-MNIST}. For each experiment we used a step-wise learning rate reduction schedule to achieve the best results. 
We selected the initial learning rate (step-size), step reduction interval, and size of reduction from the grid $[1, 0.8, 0.6, 0.4, 0.3, 0.2, 0.1]$, $[5000, 7500, 10000]$, and $[1.5, 1.6, 1.8, 2]$ respectively. Unless stated, otherwise we fixed the number of iterations that each node performs to $45000$ and stored results at every $30$ seconds. The experiments are done in Python environment using Tensorflow V2 and MPI (mpi4py) on RCI\footnote{The access to the computational infrastructure of the OP VVV funded project CZ.02.1.01/0.0/0.0/16\_019/0000765 ``Research Center for Informatics'' is also gratefully acknowledged.} clusters over the cpu nodes.


\textbf{Convergence and Scalability} The experiments in this section analyze the convergence property of our method for the described task. We perform experiments for $I = 2, 4, 8, 16$ nodes with a fixed graph connectivity of $0.7$. We report the results on the node-wise average parameters, i.e. $x^{avg}= \sum_i x_i$.



Figure \ref{fig1} shows the time-wise convergence results for different number of nodes. We can see that the accuracy drops monotonically as the number of nodes increased. This suggests for this scale we do not witness speedup with decentralized parallelism, although accurate training is still achievable. 

\begin{figure}[t]
    \centering
    \begin{subfigure}[b]{0.40\textwidth}
        \includegraphics[width=1\linewidth]{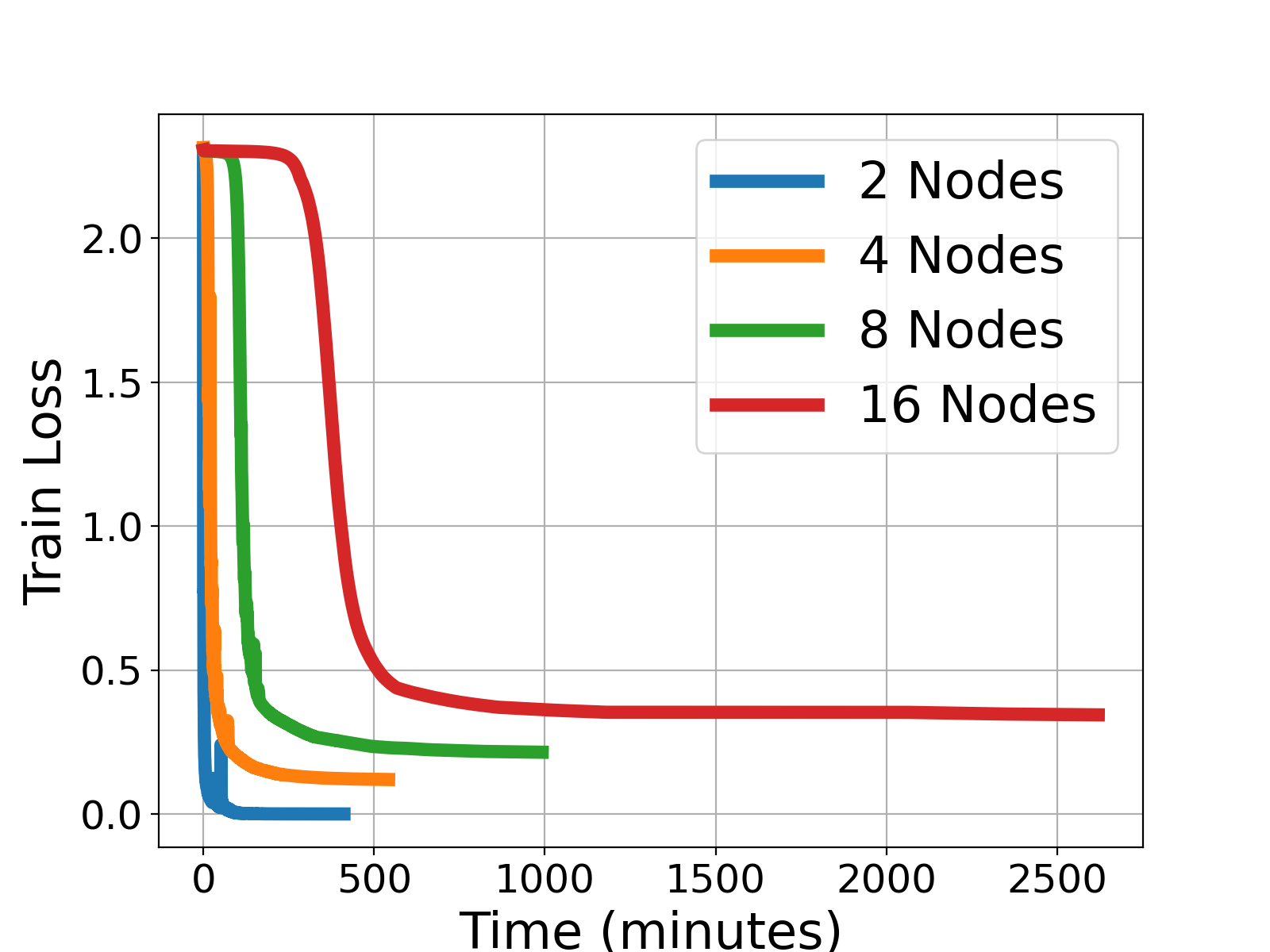}
        \label{fig1:a}
    \end{subfigure}
    \begin{subfigure}[b]{0.40\textwidth}
        \includegraphics[width=1\linewidth]{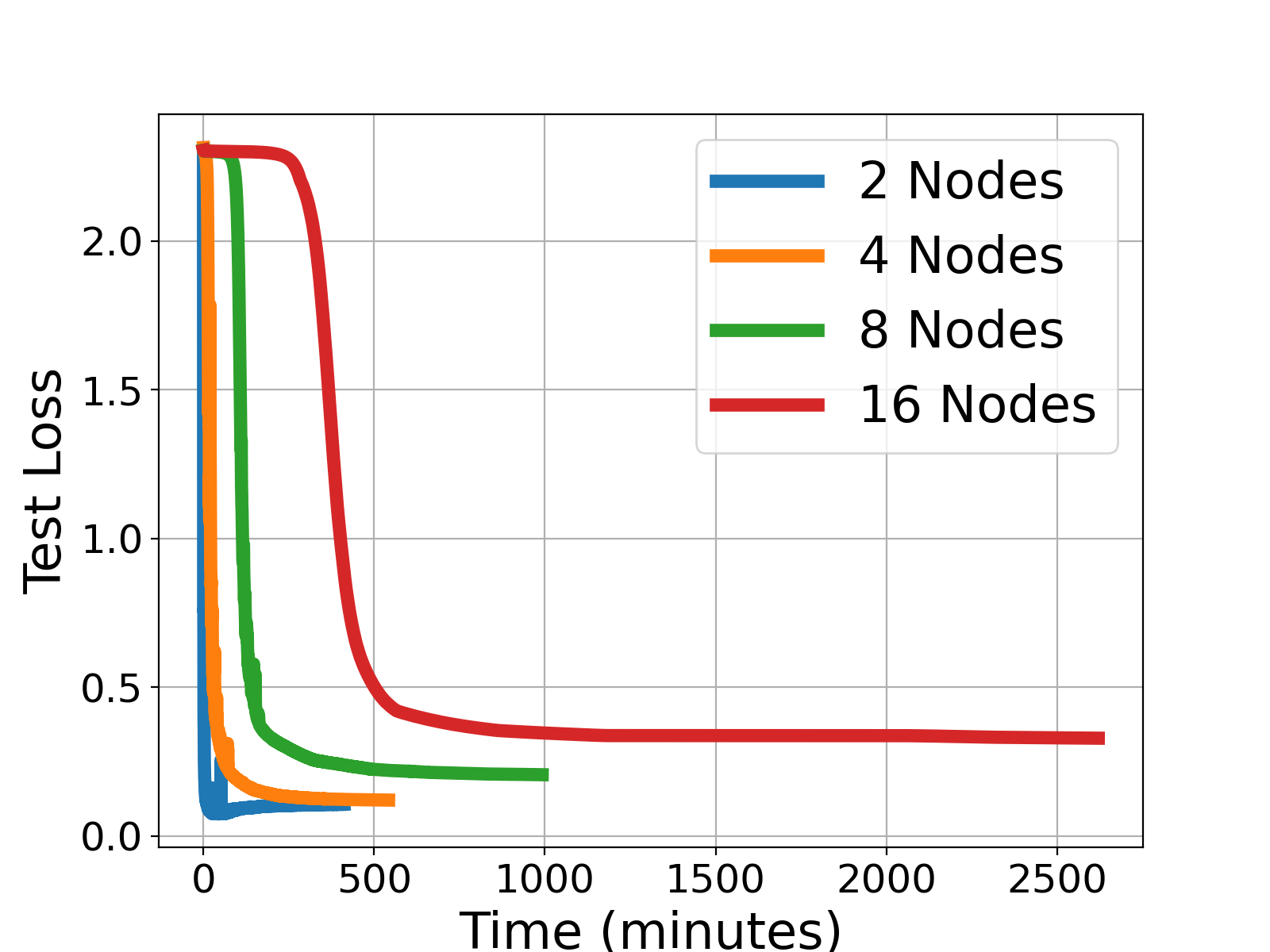}
        \label{fig1:b}
    \end{subfigure}
    \\ \vspace{0.1in}
    \begin{subfigure}[b]{0.40\textwidth}
        \includegraphics[width=1\linewidth]{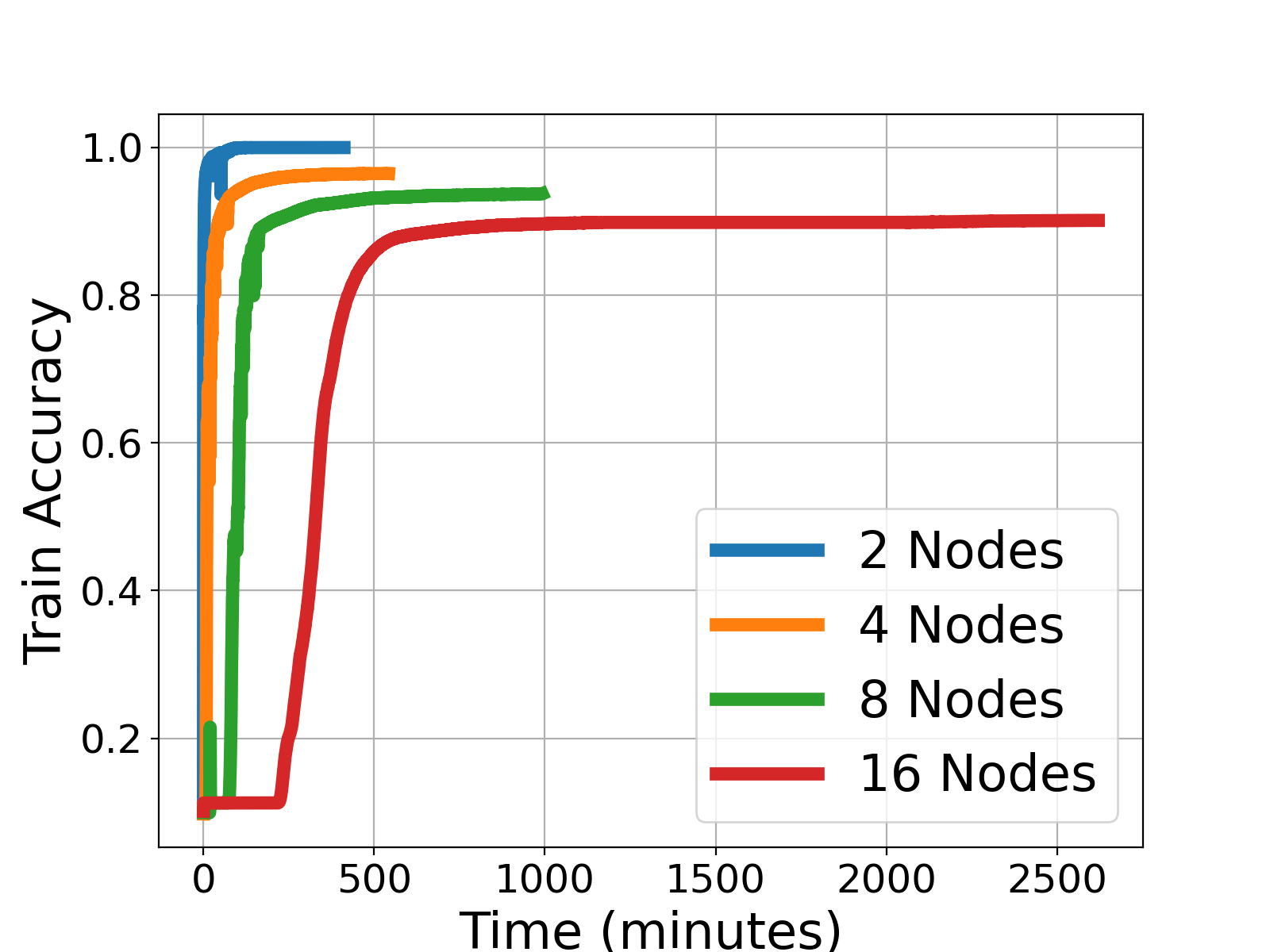}
        \label{fig1:c}
    \end{subfigure}
    \begin{subfigure}[b]{0.40\textwidth}
        \includegraphics[width=1\linewidth]{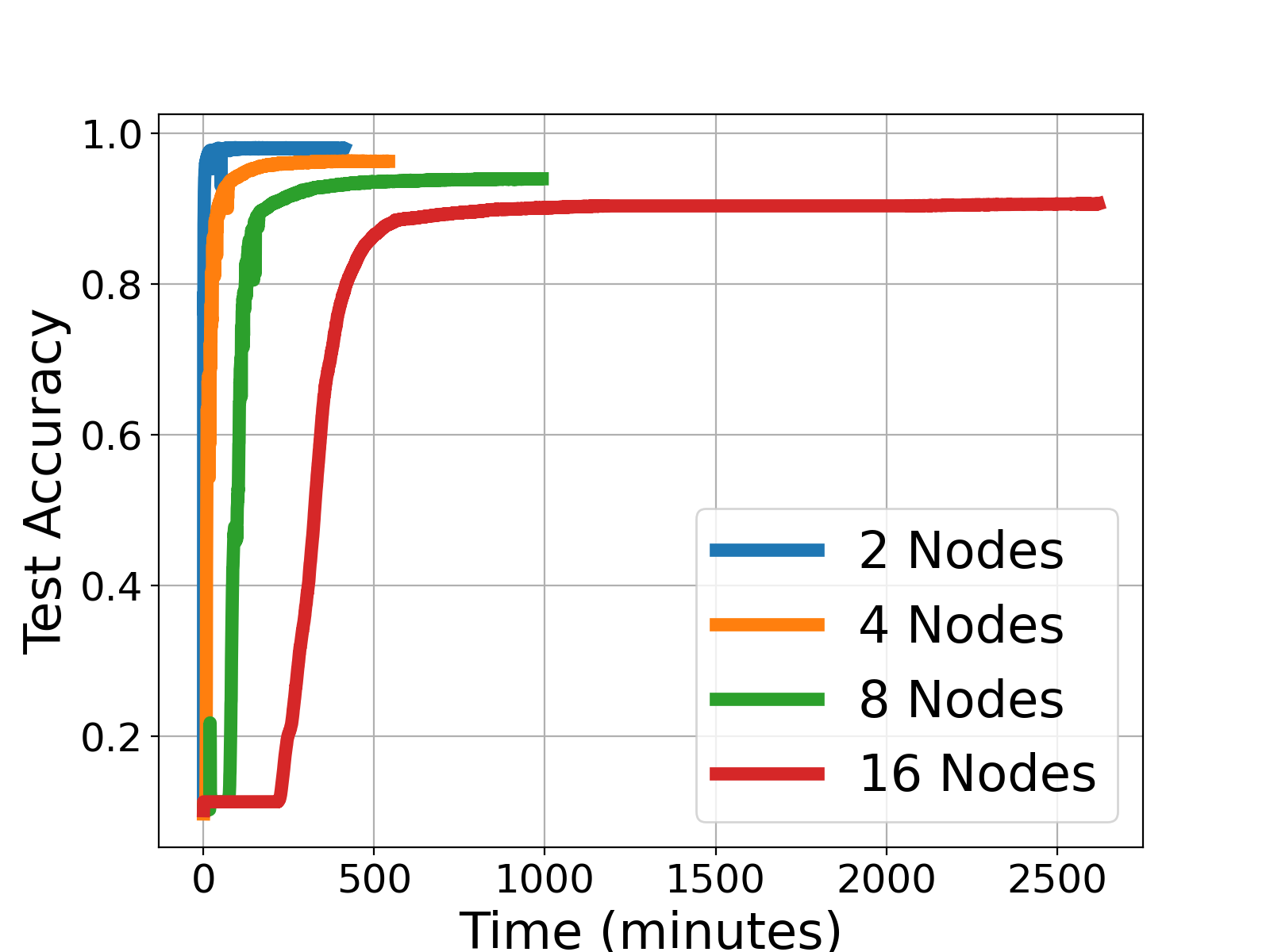}
        \label{fig1:d}
    \end{subfigure}
    \caption{Convergence results for different number of nodes}
    \label{fig1}
\end{figure}

\textbf{Graph Connectivity} This part experimentally studies the behaviour of our algorithm for different percentages of graph connectivity. We fixed the number of nodes to $16$ and did experiments with $\{0.5, 0.7, 0.9\}$ graph connectivity percentages. Figure \ref{fig2} shows the time-wise convergence results for different graph connectivity percentages. Our observation is that as the graph topology gets more connected, the convergence results get improve and our algorithm finds a better optimal parameters. 

\begin{figure}[H]
    \centering
    \begin{subfigure}[b]{0.40\textwidth}
        \includegraphics[width=1\linewidth]{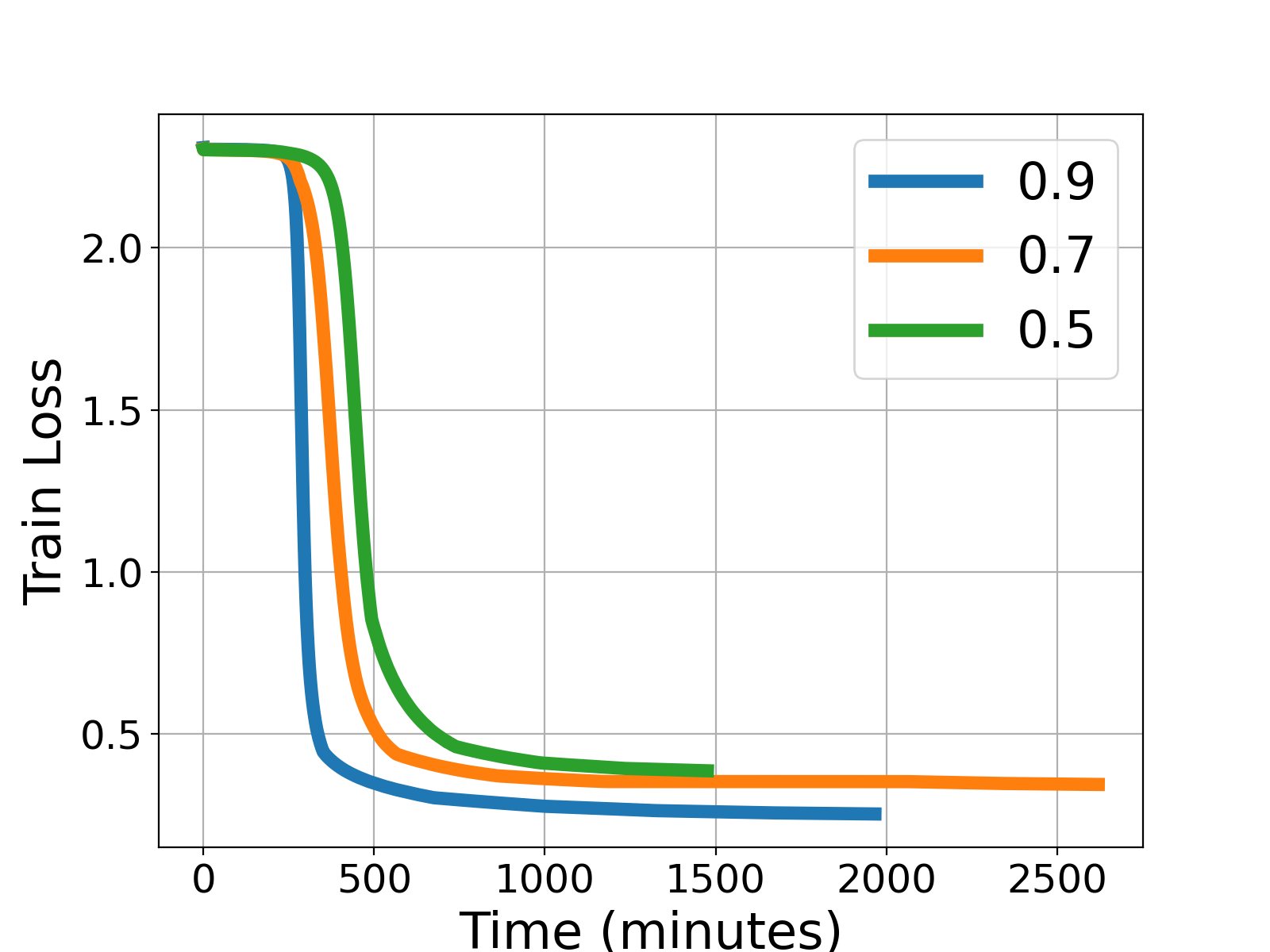}
        \label{fig2:a}
    \end{subfigure}
    \begin{subfigure}[b]{0.40\textwidth}
        \includegraphics[width=1\linewidth]{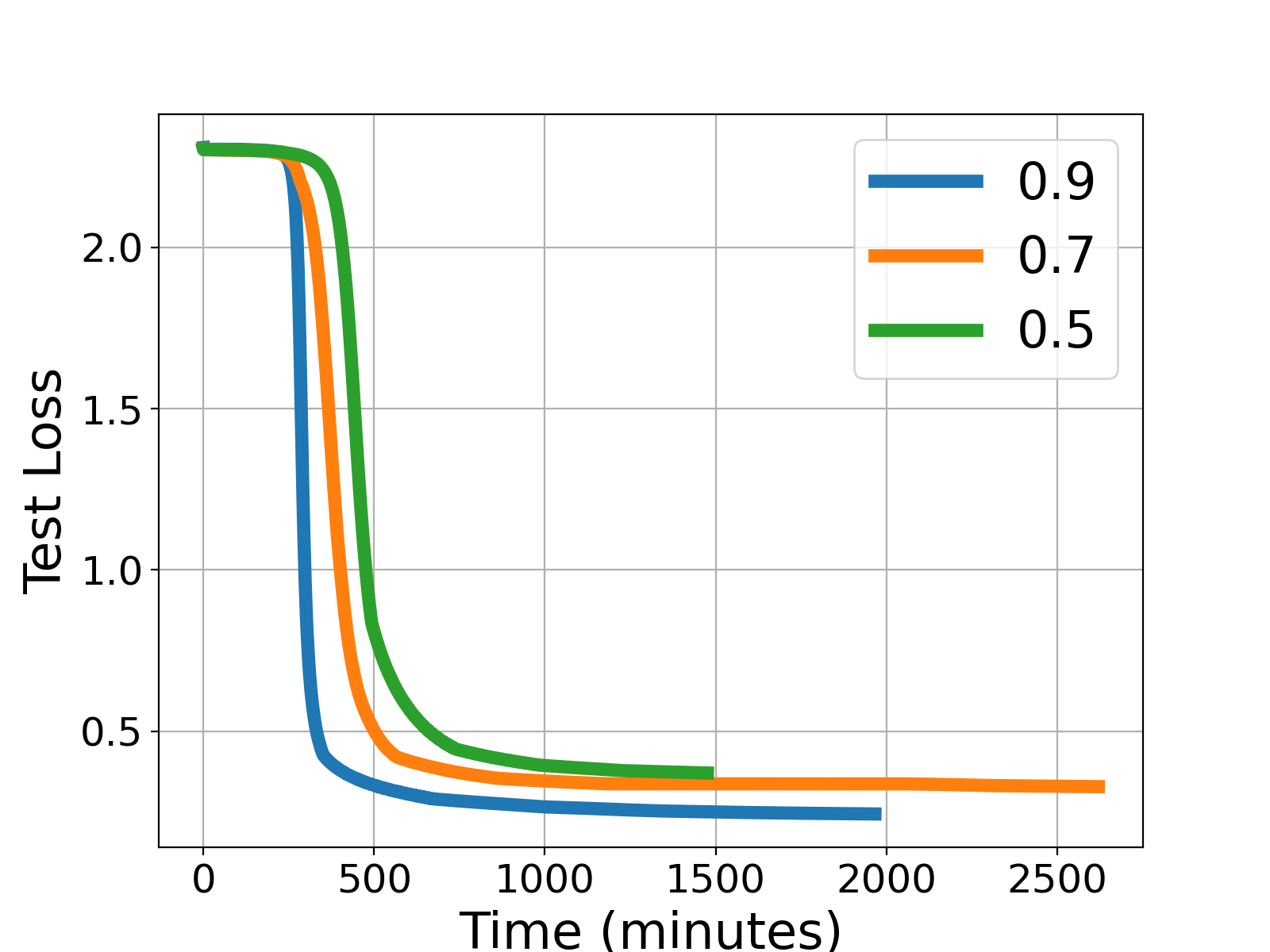}
        \label{fig2:b}
    \end{subfigure}
    \\ \vspace{0.1in}
    \begin{subfigure}[b]{0.40\textwidth}
        \includegraphics[width=1\linewidth]{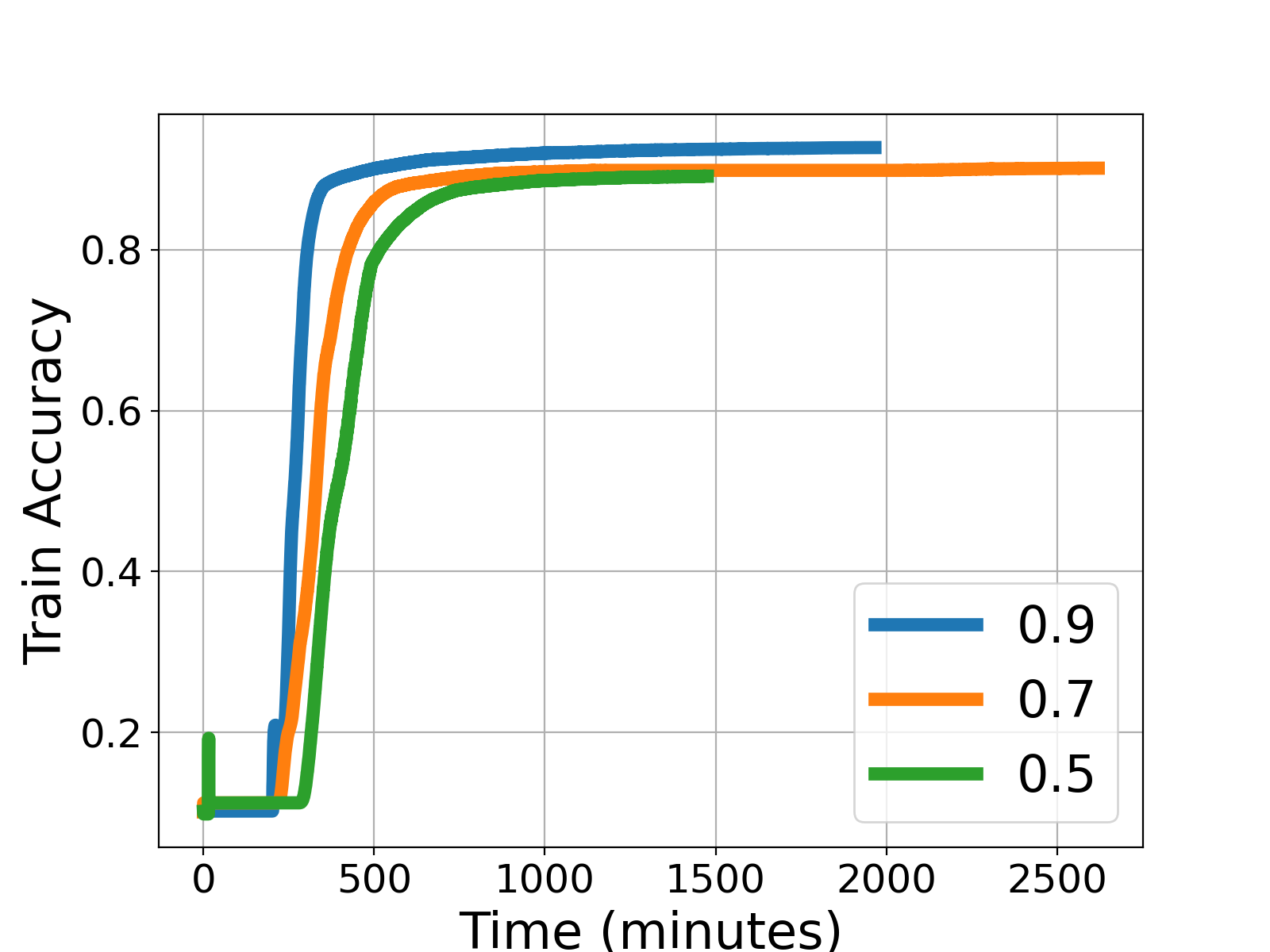}
        \label{fig2:c}
    \end{subfigure}
    \begin{subfigure}[b]{0.40\textwidth}
        \includegraphics[width=1\linewidth]{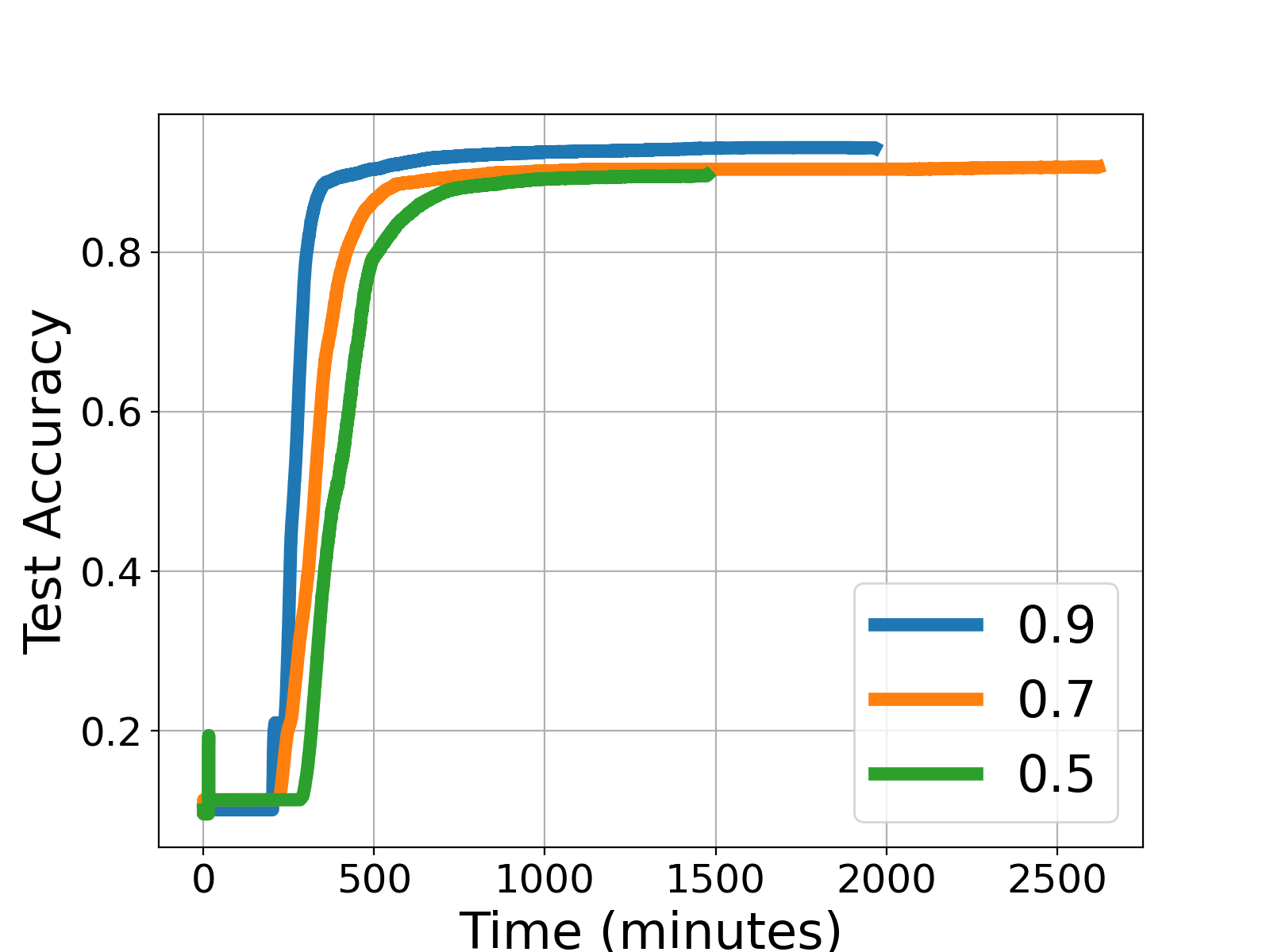}
        \label{fig2:d}
    \end{subfigure}
    \caption{Time-wise convergence results for different connectivity}
    \label{fig2}
\end{figure}

\textbf{Parameter deviations and Norm of gradients}

Figure \ref{fig3} shows the average $L_{\infty}$ distance of each node's parameters and the node-wise average, i.e. $\frac{1}{K} \sum_{i=1}^K ||x_i - x^{avg}||_{\infty}$ at the end of each snapshot, for two different graph connectivity percentages, i.e. $(0.5, 0.9)$. We observe that each node's parameters are approximately equidistant from the average. Moreover, there is a gradual increase at the beginning time around $500, 300$ minutes for $0.5, 0.9$ graph connectivity percentages respectively which is due to the initial learning-rate warm-up. After that point, we can see multiple reductions at the learning-rate reduction intervals. In fact, it is noticeable that the parameter deviations are smaller for higher connectivity percentages. 

Figure \ref{fig4} presents the norm infinity of gradients on the whole dataset using the node-wise averaged parameters. As the graph connectivity percentage increases, norm infinity of the gradients gets smaller and reduces faster. We can see a gradual increase at the beginning time around $500, 300$ minutes for $0.5, 0.9$ graph connectivity percentages respectively which is consistent with our observation in parameter deviation plots in Figure \ref{fig3}. 

\begin{figure}[t] 
    \centering
    \begin{subfigure}[b]{0.40\textwidth}
        \includegraphics[width=1\linewidth]{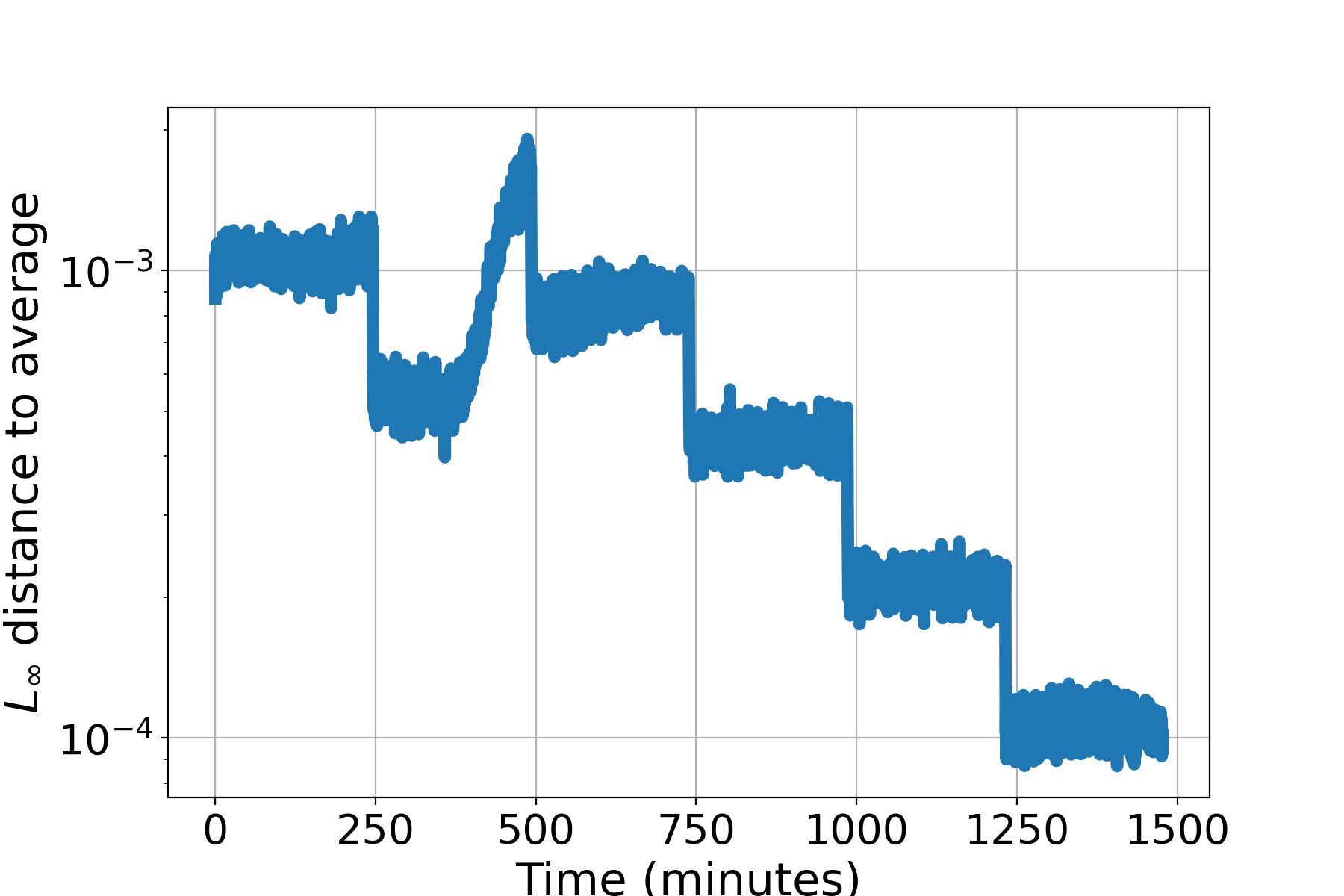}
        \label{fig3:a}
        \caption{0.5 graph connectivity}
    \end{subfigure}
    \begin{subfigure}[b]{0.40\textwidth}
        \includegraphics[width=1\linewidth]{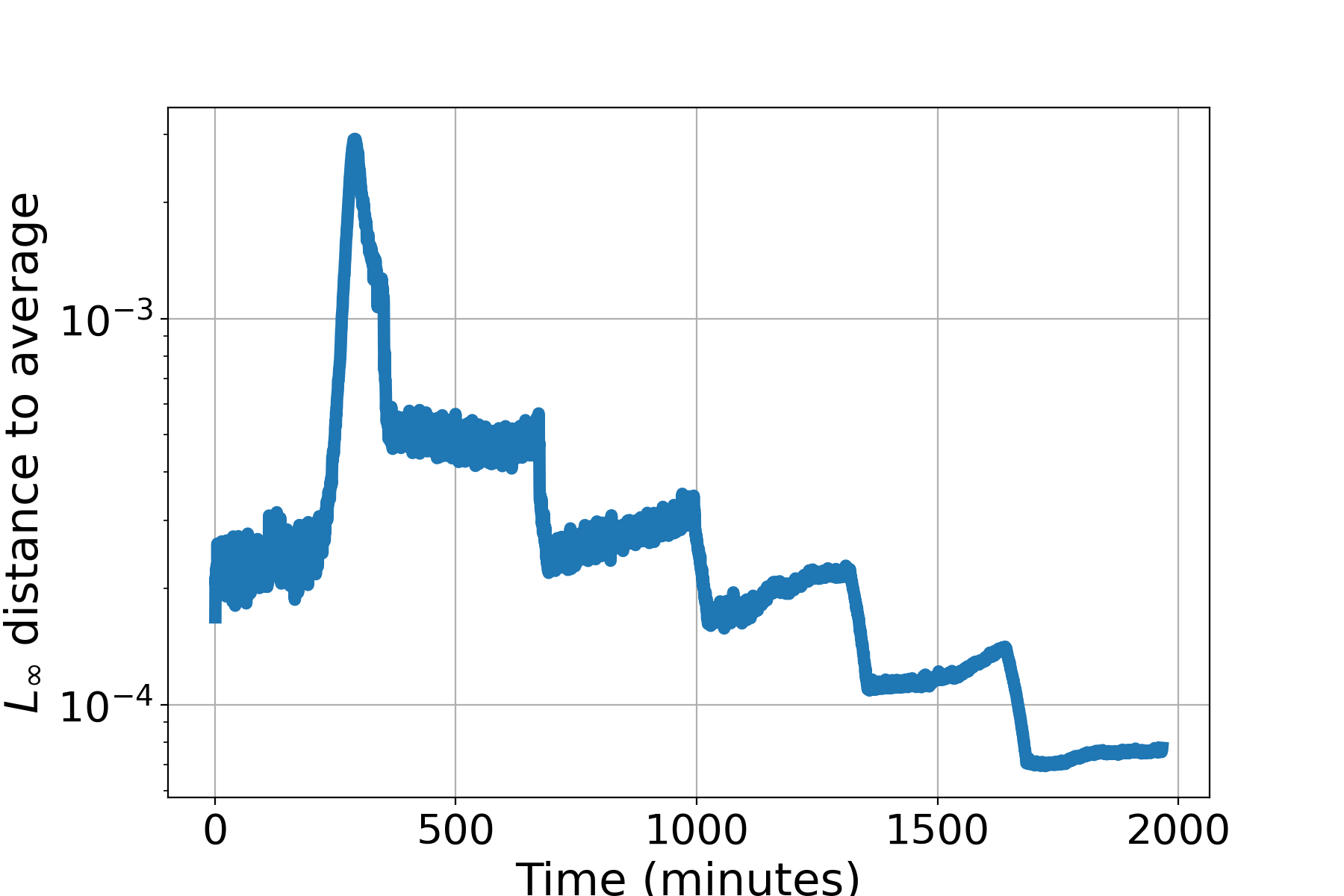}
        \label{fig3:b}
        \caption{0.9 graph connectivity}
    \end{subfigure}
    \caption{$L_{\infty}$ distance to average for different graph connectivity percentages}
    \label{fig3}
\end{figure}

\begin{figure}[H]
    \centering
    \begin{subfigure}[b]{0.40\textwidth}
        \includegraphics[width=1\linewidth]{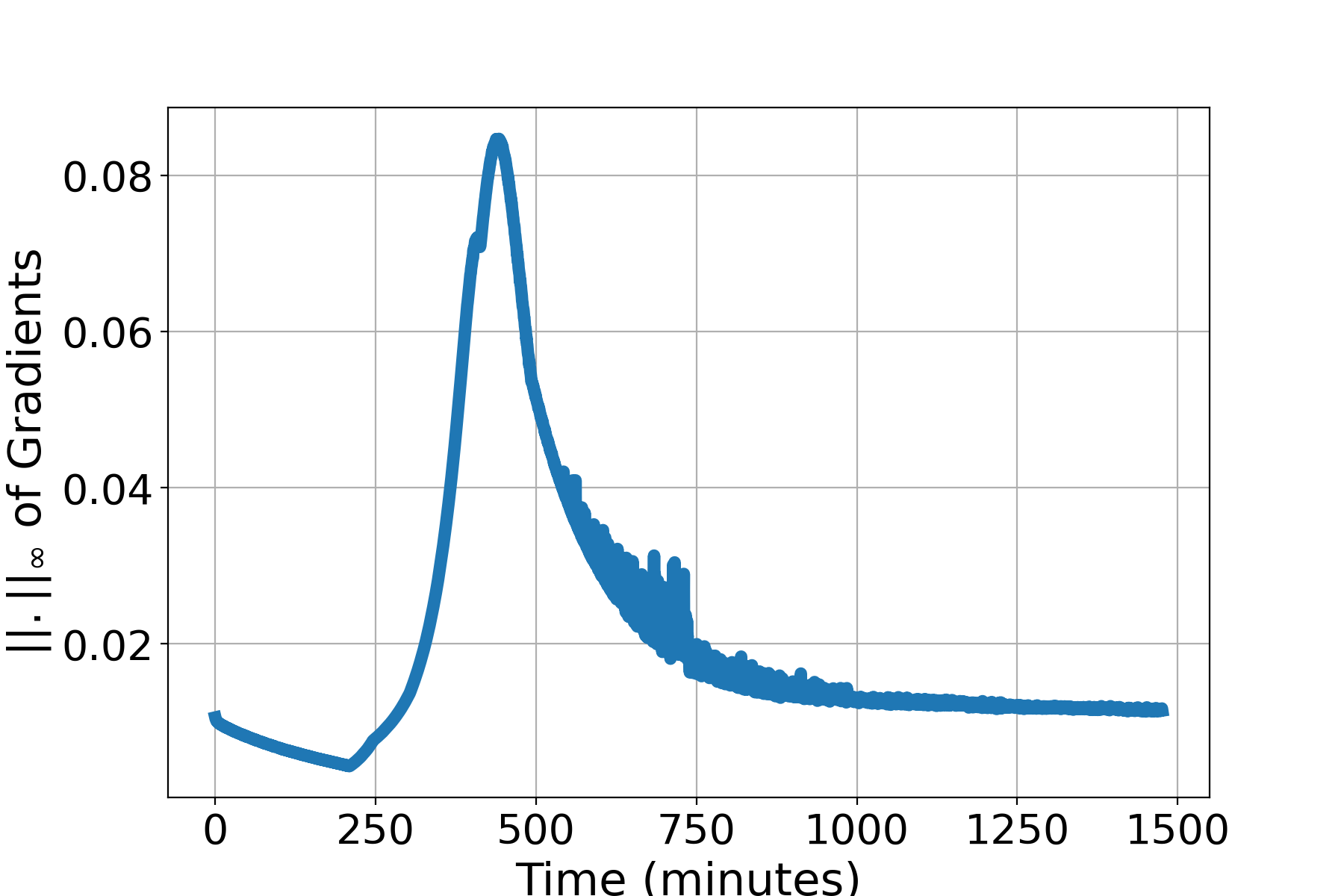}
        \caption{0.5 graph connectivity}
        \label{fig4:a}
    \end{subfigure}
    \begin{subfigure}[b]{0.40\textwidth}
        \includegraphics[width=1\linewidth]{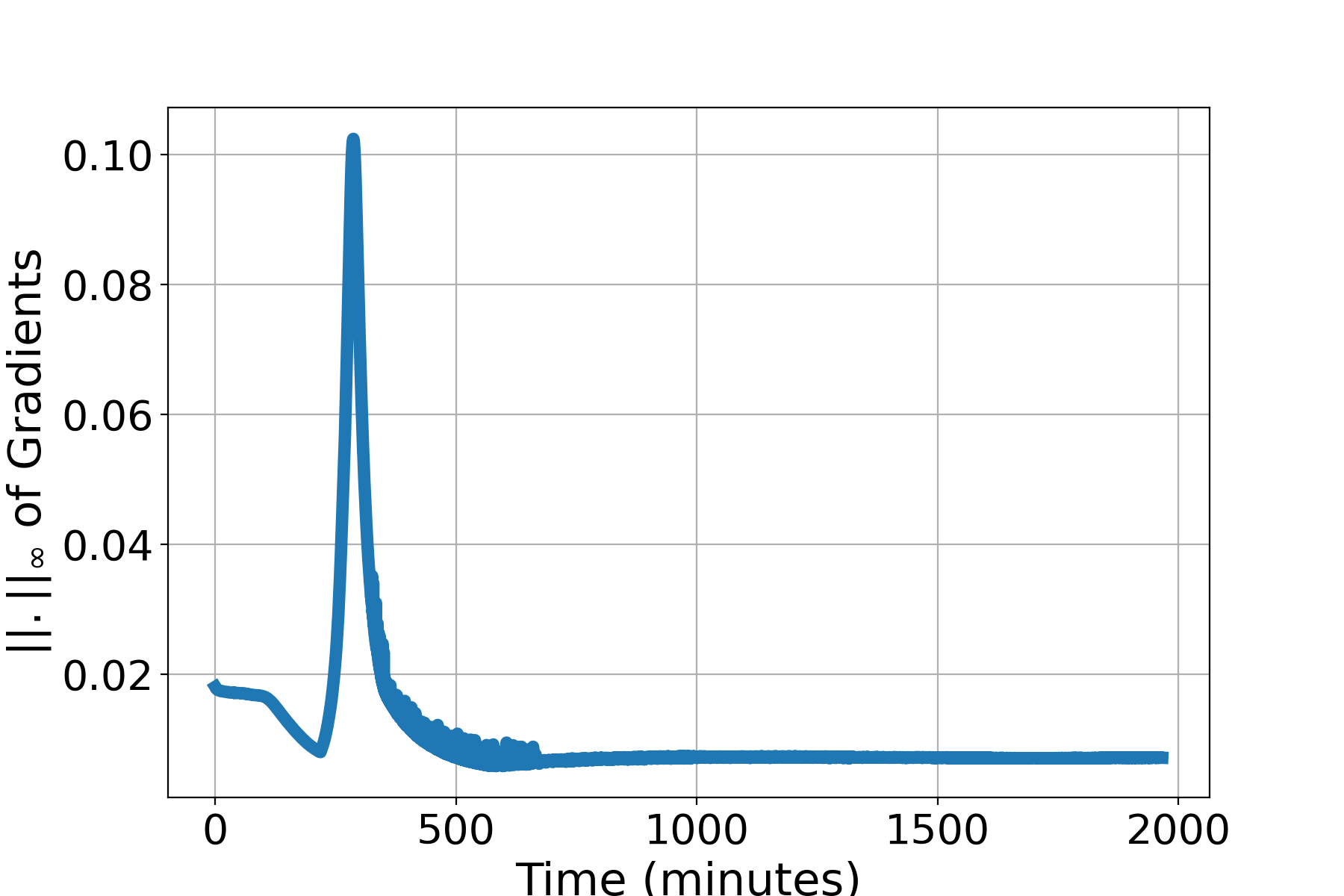}
        \caption{0.9 graph connectivity}
        \label{fig4:b}
    \end{subfigure}
    \caption{Norm of gradients for different graph connectivity percentages}
    \label{fig4}
\end{figure} 

\textbf{Maximum delay and Time per iteration}

Figures \ref{fig5:a},\ref{fig5:b} show maximum delay, and average time per iteration as a function of number of nodes for a fixed $0.7$ graph connectivity percentage respectively. Maximum delay is the delay between the fastest and slowest nodes. We observe that maximum delay increased as the number of nodes increased and it was always bounded. Indeed, from figure \ref{fig5:b} we can see that average time per iteration also increased as the number of nodes increased. Figure \ref{fig6} represents the behaviour of maximum delay w.r.t graph connectivity percentage. We can observe that maximum delay has been increased by increasing connectivity percentage and it was always bounded. 

\begin{figure}[H]
    \centering
    \begin{subfigure}[b]{0.40\textwidth}
        \includegraphics[width=1\linewidth]{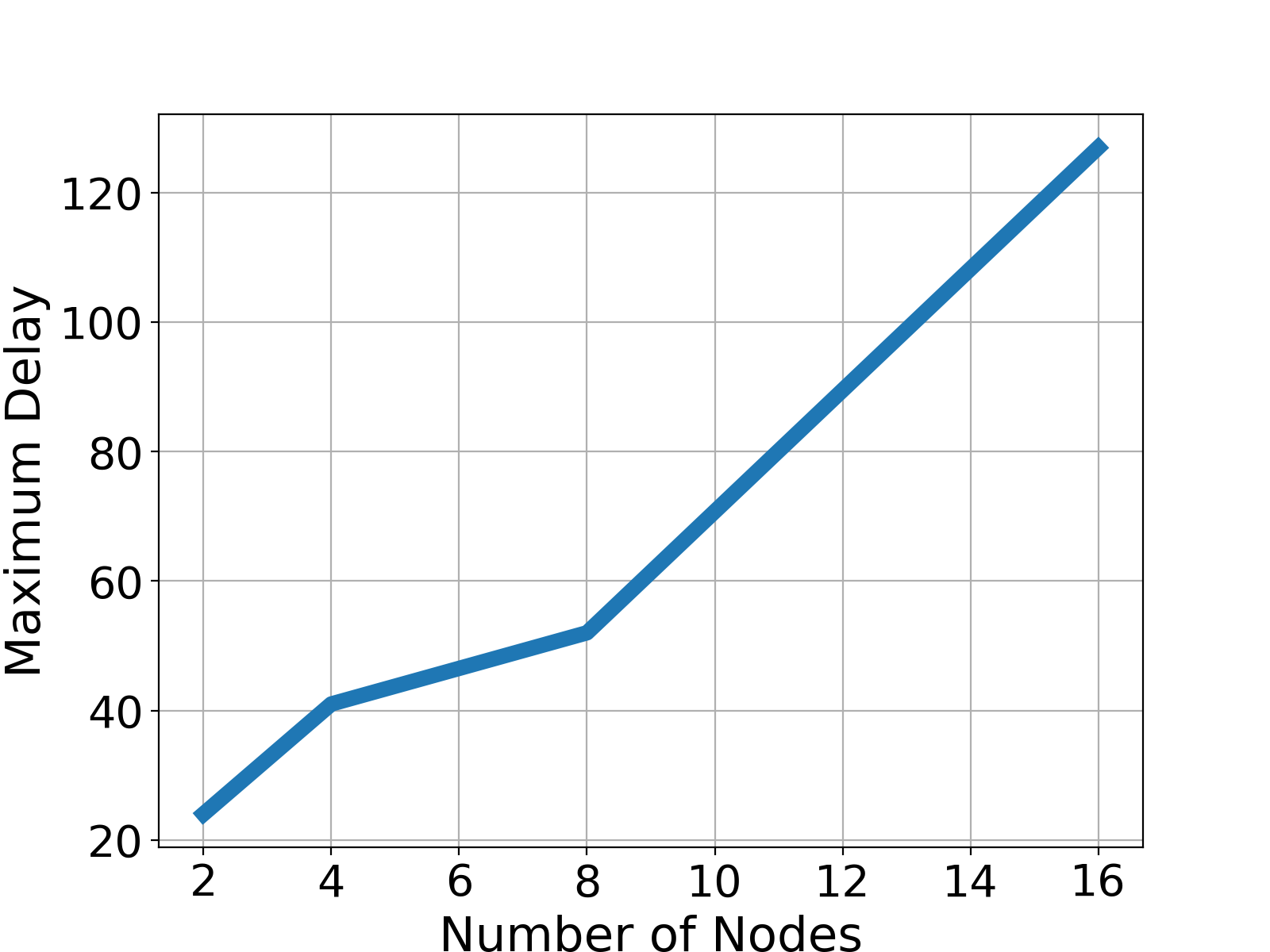}
        \caption{Maximum Delay}\label{fig5:a}
    \end{subfigure}
    \begin{subfigure}[b]{0.40\textwidth}
        \includegraphics[width=1\linewidth]{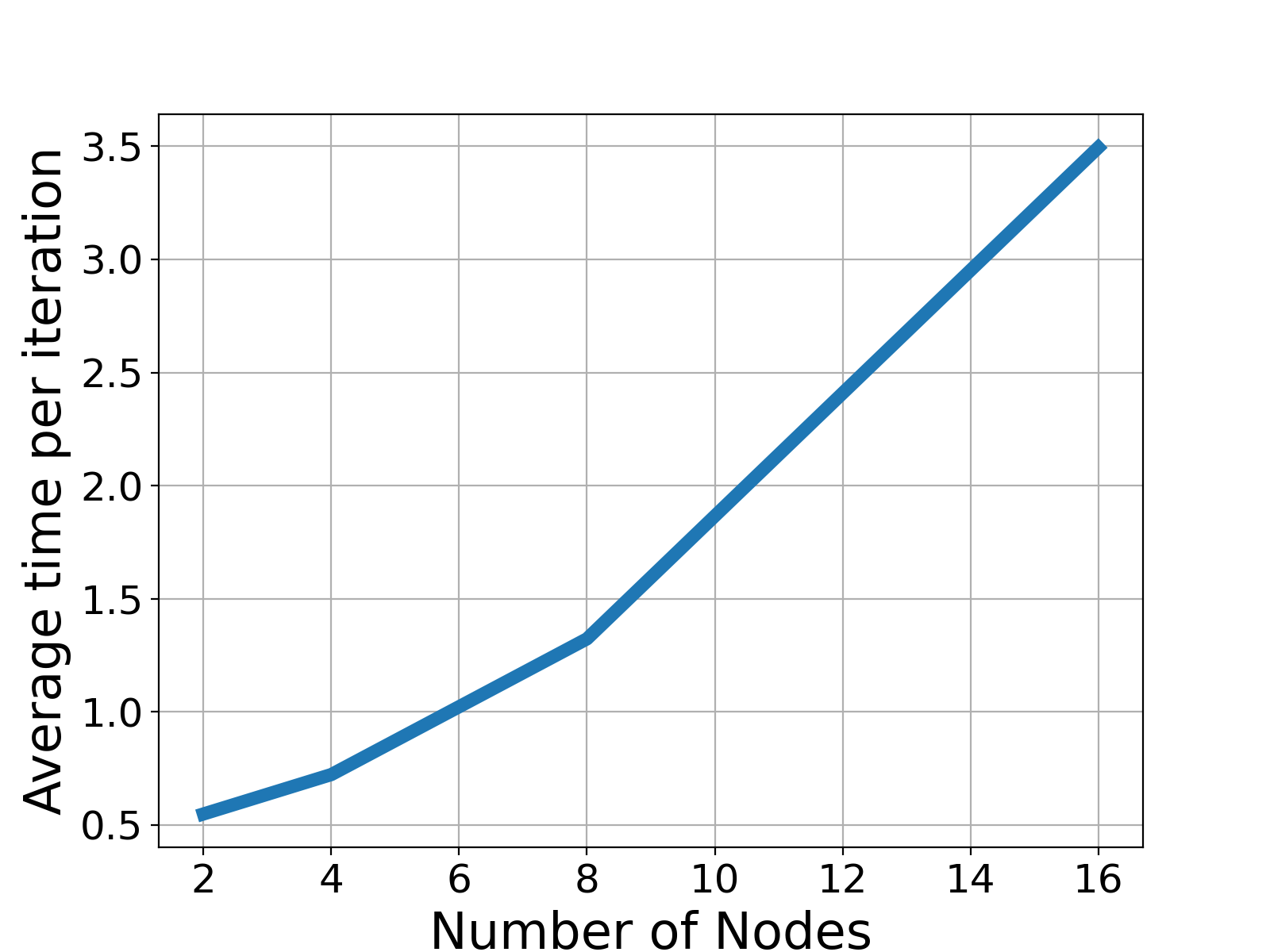}\caption{Average Time Per Iteration}
        \label{fig5:b}
    \end{subfigure}
    \caption{Maximum delay and average time per iteration across different number of nodes for $0.7$ graph connectivity percentage}
    \label{fig5}
\end{figure}

\begin{figure}[H]
    \centering
    \begin{subfigure}[b]{0.40\textwidth}
        \includegraphics[width=1\linewidth]{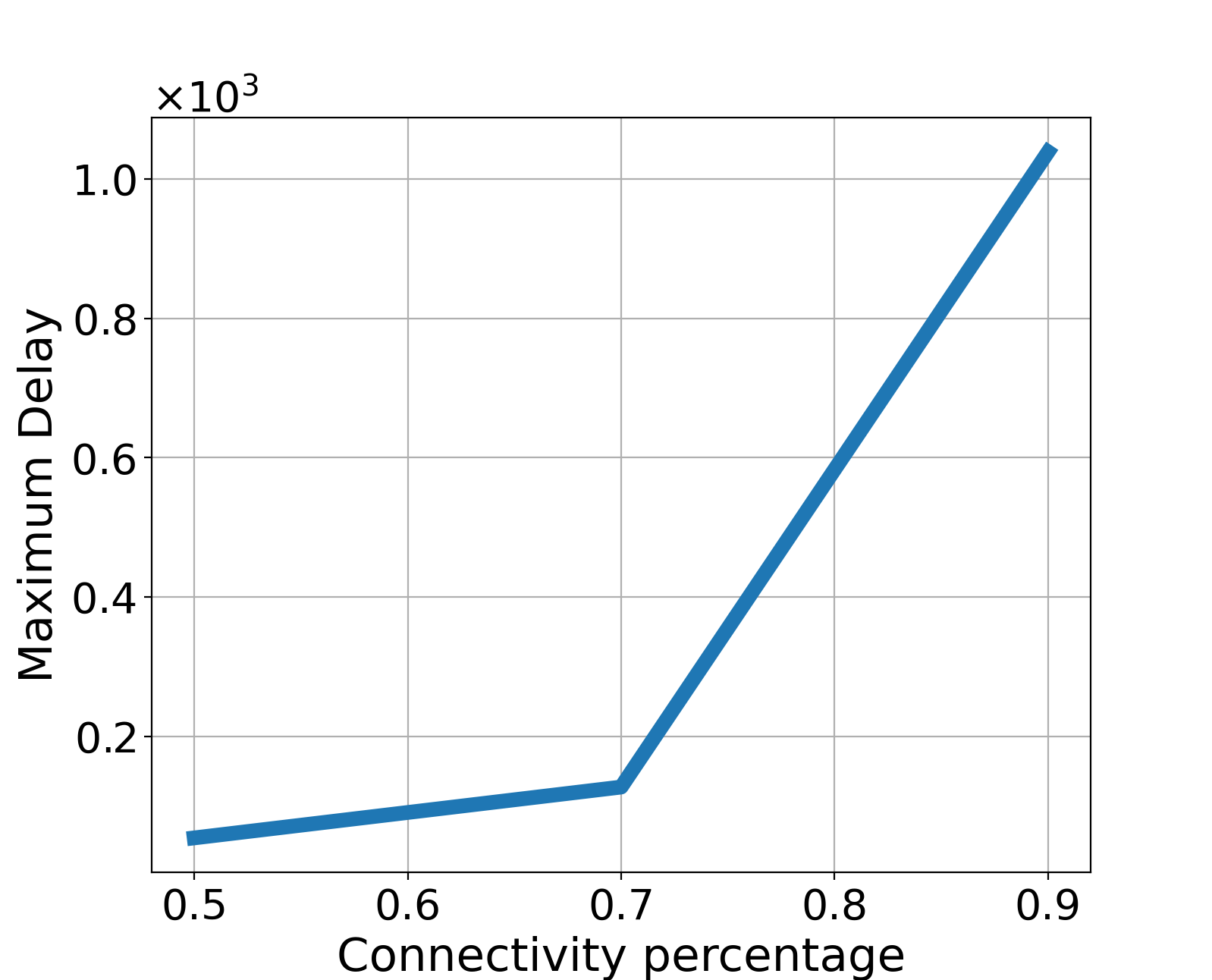}
        
    \end{subfigure}
    \caption{Maximum delay w.r.t graph connectivity percentage}
    \label{fig6}
\end{figure}

\section{Conclusion}
In this paper we have studied stochastic nonconvex decentralized optimization on directed graphs with asynchronous communication, closing an important gap in the literature on distributed optimization. The theoretical results confirm the expected sublinear convergence rate, and corroborate a similar pattern of faster consensus and tracking convergence, leaving the optimization to dominate the error asymptotically. Our numerical results confirm the convergence of the algorithm and show the scalability with the number of nodes and different graph connectivity percentages on a non-convex image classification task. 

\section*{Acknowledgment}
VK acknowledges support to the OP VVV funded project CZ.02.1.01/0.0/0.0/16\_019/0000765 ``Research Center for Informatics''
\bibliographystyle{IEEEtran}
\bibliography{refs}
\end{document}